\documentclass[11pt]{article}

\topmargin  = -0.4 in
\oddsidemargin = 0.25 in
\setlength{\textheight}{8.6in}
\setlength{\textwidth}{6.2in}
\setlength{\unitlength}{1.0 mm}
\usepackage{hyperref}
\usepackage{tikz}
\usepackage{authblk}
\usepackage{enumerate}
\usepackage{mathrsfs}
\usepackage{amsmath}
\usepackage{amssymb}
\usepackage{wrapfig}
\usepackage[thmmarks]{ntheorem}
\usepackage{color}

\newlength{\bibitemsep}\setlength{\bibitemsep}{.2\baselineskip plus .05\baselineskip minus .05\baselineskip}
\newlength{\bibparskip}\setlength{\bibparskip}{0pt}
\let\oldthebibliography\thebibliography
\renewcommand\thebibliography[1]{%
  \oldthebibliography{#1}%
  \setlength{\parskip}{\bibitemsep}%
  \setlength{\itemsep}{\bibparskip}%
}

\renewcommand{\paragraph}{\roman{paragraph}}

\usepackage{hyperref}
\usepackage{tikz}
\usetikzlibrary{arrows,shapes,positioning}
\usetikzlibrary{decorations.markings}
\tikzstyle arrowstyle=[scale=1]
\tikzstyle directed=[postaction={decorate,decoration={markings, mark=at position .65 with {\arrow[arrowstyle]{stealth}}}}]
\tikzstyle reverse directed=[postaction={decorate,decoration={markings, mark=at position .65 with {\arrowreversed[arrowstyle]{stealth};}}}]

\newtheorem{theorem}{Theorem}[section]

\newtheorem{conjecture}[theorem]{Conjecture}

\newtheorem{lemma}[theorem]{Lemma}
\newtheorem{proposition}[theorem]{Proposition}
\newtheorem{claim}{Claim}
\newtheorem{subclaim}{Subclaim}[claim]

\newenvironment{proof}{\noindent {\bf
Proof.}}{\rule{3mm}{3mm}\par\medskip}

\newcommand{\phiv}{\varphi}
\newcommand{\phibar}{\bar{\varphi}}

\begin{document}
\title{Decomposition of class II graphs into two class I graphs\thanks{The work of the second author has been supported by the National Science Foundation through Grant DMS-2246292. The work of the third author has been supported by the Simons Foundation through the Grant No. 839830. The work of the forth author has been supported by the National Science Foundation through Grant CCF-2008422.}}

\author{
\small Yan Cao \thanks{School of Mathematical Sciences, Dalian University of Technology, Dalian, Liaoning, China, 116024},
Guangming Jing \thanks{Department of Mathematics, West Virginia University, Morgantown, WV, USA, 26505}, 
Rong Luo \thanks{Department of Mathematics, West Virginia University, Morgantown, WV, USA, 26505},
Vahan Mkrtchyan \thanks{Corresponding Author, Computer Science Department, Boston College, Chestnut Hill, MA, USA, 02467},
 Cun-Quan Zhang \thanks{Department of Mathematics, West Virginia University, Morgantown, WV, USA, 26505},
 Yue Zhao \thanks{Department of Mathematics, University of Central Florida, Orlando, FL, USA, 32816}
}
 \date{}
\maketitle
\vspace{-1cm}
\begin{abstract} Mkrtchyan and Steffen [J. Graph Theory, 70 (4), 473--482, 2012] showd that every class II simple graph can be decomposed into a maximum $\Delta$-edge-colorable subgraph and a matching. They further conjectured that every graph $G$ with chromatic index $\Delta(G)+k$ ($k\geq 1$) can be decomposed into a maximum $\Delta(G)$-edge-colorable subgraph (not necessarily class I) and a $k$-edge-colorable subgraph. In this paper, we first generalize their result to multigraphs and show that every multigraph $G$ with multiplicity $\mu$ can be decomposed into a maximum $\Delta(G)$-edge-colorable subgraph and a subgraph with maximum degree at most $\mu$. Then we prove that every graph $G$ with chromatic index $\Delta(G)+k$ can be decomposed into two class I subgraphs $H_1$ and $H_2$ such that $\Delta(H_1) = \Delta(G)$ and $\Delta(H_2) = k$, which is a variation of  their conjecture.
\end{abstract}

Keywords: 
edge-coloring, chromatic index,  partition, $k$-edge-colorable subgraph,  class I graph


\section{Introduction}

Graphs considered in this paper are finite, undirected and  may contain multiple edges, but no loops. Let $V(G)$ and $E(G)$ denote the sets of vertices and edges of a graph $G$, respectively. For a vertex $x \in V$, let $N(x) =\{v~|~xv \in E(G)\}$ and $E(x)=\{e~|~ \mbox{$e$~is~incident~with~}x\}$. The degree of $x\in V(G)$ is denoted by $d(x)=|E(x)|$. The minimum and maximum degrees of vertices in $G$ are denoted by $\delta(G)$ and $\Delta(G)$, respectively.  A graph $G$ is regular if $\delta(G)=\Delta(G)$.  For two vertices $x$ and $y$, let $E(x,y)$ denote the set of edges  between $x$ and $y$ and let $\mu_G(x,y) = |E(x,y)|$. $\mu(G) = \max\{\mu_G(x,y)| ~x, y \in V(G)\}$ is called the  {\em multiplicity} of $G$.
 If there is no confusion from the context, we denote $\delta(G)$, $\Delta(G)$,  $\mu(G)$, $\mu_G(x,y)$ by $\delta$, $\Delta$,  $\mu$, and $\mu(x,y)$ respectively. 

An {\em edge coloring} of a graph is a function assigning values
(colors) to the edges  of the graph in such a way that any two
adjacent edges receive different colors. A graph is {\em $k$-edge-colorable} if there is an edge coloring of the graph with
colors from $C= \{1,\dots,k\}$.  The smallest integer $k$ such that $G$ is $k$-edge-colorable is called the {\em chromatic index} of $G$, and is denoted by $\chi'(G)$. Clearly $\chi'(G) \geq \Delta(G)$.

The classical theorems of Shannon and Vizing present upper bounds on $\chi'(G)$ as follows.
\begin{theorem}[Shannon \cite{Shannon:1949}]\label{shannon}
For any graph $G$, $\Delta(G)\leq \chi'(G) \leq \left \lfloor \frac{3\Delta(G)}{2} \right \rfloor$.
\end{theorem}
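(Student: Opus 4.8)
The plan is to prove the two inequalities separately. The lower bound $\Delta(G)\le\chi'(G)$ is immediate: the $d(x)=\Delta(G)$ edges incident with a maximum-degree vertex $x$ are pairwise adjacent, so any proper edge coloring must assign them $\Delta(G)$ distinct colors. For the upper bound I would set $k=\left\lfloor \frac{3\Delta}{2}\right\rfloor$ and argue by induction on $|E(G)|$ that $G$ admits a proper edge coloring with the color set $\{1,\dots,k\}$, the edgeless graph being the trivial base case.

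For the inductive step, delete an edge $e=xy$ and fix, by the induction hypothesis, a proper $k$-edge-coloring $\varphi$ of $G-e$. For a vertex $w$ write $\bar\varphi(w)$ for the set of colors \emph{missing} at $w$. Since $d_{G-e}(x),d_{G-e}(y)\le \Delta-1$, each endpoint misses at least $k-(\Delta-1)=\left\lfloor\frac{\Delta}{2}\right\rfloor+1$ colors. If $\bar\varphi(x)\cap\bar\varphi(y)\ne\emptyset$, I color $e$ with a common missing color and finish. Otherwise the missing sets are disjoint, and I choose $\alpha\in\bar\varphi(x)$ and $\beta\in\bar\varphi(y)$; by disjointness $\alpha$ is present at $y$ and $\beta$ is present at $x$. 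Consider the subgraph $H$ spanned by the edges colored $\alpha$ or $\beta$. Because $\varphi$ is proper, every vertex meets at most one $\alpha$-edge and at most one $\beta$-edge (this remains true for multigraphs), so $\Delta(H)\le 2$ and the components of $H$ are paths and cycles; both $x$ and $y$ are endpoints of maximal two-colored paths. If $x$ and $y$ lie in different components of $H$, I interchange $\alpha$ and $\beta$ on the component containing $x$; this makes $\beta$ missing at $x$ while keeping it missing at $y$, and I color $e$ with $\beta$.

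The remaining, and genuinely hard, case is when $x$ and $y$ are the two endpoints of a single $\alpha\beta$-path $P$: here the Kempe interchange along $P$ merely swaps the roles of $\alpha$ and $\beta$ at the two ends and creates no common free color. This is exactly where the slack of Shannon's bound must be spent: each endpoint misses $\ge \left\lfloor\frac{\Delta}{2}\right\rfloor+1$ colors, far more than the single free color that a Vizing-type argument would guarantee. The plan is to exploit this abundance by selecting a third color $\gamma\in\bar\varphi(x)\setminus\{\alpha\}$ and analyzing the two-colored chain (say $\beta\gamma$ or $\alpha\gamma$) issuing from the appropriate endpoint; since $\gamma$ is missing at $x$ but present at $y$, a carefully chosen recoloring along such a chain can separate $x$ from $y$ in the relevant bichromatic subgraph, reducing this configuration to the previous case. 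Controlling how these chains interact with $P$, and verifying that the guaranteed surplus of missing colors always supplies a usable $\gamma$ together with a chain that does not reconnect $x$ and $y$, is the main obstacle; it is precisely this bookkeeping that forces the value $\left\lfloor\frac{3\Delta}{2}\right\rfloor$ and not something smaller.
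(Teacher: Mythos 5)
Your lower bound and the two easy cases of the inductive step are fine, but there is a genuine gap exactly where you acknowledge one: the case in which $x$ and $y$ are the two endpoints of a single $(\alpha,\beta)$-path is never handled, only announced as a ``plan,'' and the plan as stated does not work. In that case one may assume (otherwise your second case applies) that for \emph{every} pair $(\alpha',\beta')\in\bar{\varphi}(x)\times\bar{\varphi}(y)$ the $(\alpha',\beta')$-chain through $x$ is a path ending at $y$. Swapping any such chain merely exchanges the roles of $\alpha'$ and $\beta'$ at its two ends and leaves $\bar{\varphi}(x)$ and $\bar{\varphi}(y)$ disjoint, while a chain whose two colors both lie in $\bar{\varphi}(x)$ (or both in $\bar{\varphi}(y)$) does not pass through $x$ (resp.\ $y$) at all. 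So no single Kempe swap built only from colors missing at $x$ or $y$ — which is all your sketch invokes — can create a common missing color; the abundance of missing colors at $x$ and $y$ alone is not what makes the proof go.

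The missing idea is a third \emph{vertex}, not a third color. Let $z$ be the other endpoint of the unique $\beta$-edge at $x$ (the first internal vertex of $P$; note $z\neq y$ because $\beta\in\bar{\varphi}(y)$). Then $|\bar{\varphi}(x)|+|\bar{\varphi}(y)|+|\bar{\varphi}(z)|\ge 2\left(\lfloor\Delta/2\rfloor+1\right)+\lfloor\Delta/2\rfloor=3\lfloor\Delta/2\rfloor+2>k$, so two of the three sets intersect, and since $\bar{\varphi}(x)\cap\bar{\varphi}(y)=\emptyset$, the set $\bar{\varphi}(z)$ meets $\bar{\varphi}(x)$ or $\bar{\varphi}(y)$. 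If some $\gamma\in\bar{\varphi}(z)\cap\bar{\varphi}(x)$, recolor the edge $xz$ from $\beta$ to $\gamma$; now $\beta$ is missing at both $x$ and $y$, and you color $e$ with $\beta$. If instead $\gamma\in\bar{\varphi}(z)\cap\bar{\varphi}(y)$ (and $\alpha$ is present at $z$), then $z$ has degree one in the $(\alpha,\gamma)$-subgraph, so it is an endpoint of its $(\alpha,\gamma)$-chain; that chain cannot be the $x$--$y$ path $P_x(\alpha,\gamma)$, whose internal vertices all see both colors, so it avoids $x$ and $y$, and swapping on it puts $\alpha$ into $\bar{\varphi}(z)$, reducing to the previous case. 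This pigeonhole over the triple $\{x,y,z\}$ — three vertices each missing roughly $\Delta/2$ colors — is precisely where the value $\lfloor 3\Delta/2\rfloor$ enters. For context: the paper itself does not prove Theorem~\ref{shannon} (it is cited from Shannon), but note that within the paper's toolkit the bound also follows in two lines from the fan equation of Lemma~\ref{viz}(a): if $\chi'(G)-1\ge\lfloor 3\Delta/2\rfloor$, then for a critical edge and a maximal multi-fan $F$ at $x$ we get $2=\sum_{z\in V(F)}\bigl(d_G(z)+\mu_F(x,z)-(\chi'(G)-1)\bigr)\le |V(F)|\Delta+\Delta-|V(F)|\lfloor 3\Delta/2\rfloor\le \Delta-2\lfloor\Delta/2\rfloor\le 1$, a contradiction.
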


\begin{theorem}[Vizing \cite{vizing:1964}]
For any graph $G$, $\Delta(G)\leq \chi'(G) \leq \Delta(G)+\mu(G)$.
\end{theorem}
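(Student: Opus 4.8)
The lower bound $\chi'(G) \ge \Delta$ is immediate: a vertex of degree $\Delta$ meets $\Delta$ pairwise adjacent edges, so they require $\Delta$ distinct colors. For the upper bound I would set $k = \Delta + \mu$ and argue by induction on $|E(G)|$, the empty graph being the trivial base. For the inductive step, choose any edge $e = xy$ and, by the inductive hypothesis, properly color $G - e$ with the colors $\{1,\dots,k\}$. The whole task is then to extend this coloring to $e$, after possibly recoloring some already-colored edges.

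For a vertex $v$, write $\bar\varphi(v)$ for the set of colors missing at $v$ in the current coloring of $G - e$. Since $d_{G-e}(v) \le \Delta$ for every vertex, each $v$ satisfies $|\bar\varphi(v)| \ge k - \Delta = \mu$; moreover $d_{G-e}(x) = d_G(x)-1 \le \Delta-1$ and likewise for $y$, so $|\bar\varphi(x)|, |\bar\varphi(y)| \ge \mu + 1$. If $\bar\varphi(x)\cap\bar\varphi(y)\ne\emptyset$ I color $e$ with a common missing color and am done, so I may assume $\bar\varphi(x)\cap\bar\varphi(y)=\emptyset$.

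The heart of the argument is Vizing's fan together with a Kempe-chain (color-swapping) recoloring. I would build a maximal \emph{multi-fan} at $x$ rooted at $e$: a sequence of distinct edges $e = e_0, e_1, \dots, e_p$ with $e_i = xy_i$ (the $y_i$ need not be distinct, which is the multigraph feature) such that the color $\varphi(e_i)$ is missing at $y_{i-1}$ for each $i \ge 1$. Fix $\alpha \in \bar\varphi(x)$ and let $\beta \in \bar\varphi(y_p)$. If $\alpha$ happens to be missing at some fan vertex $y_i$, then shifting the fan (reassigning to each $e_j$ the color of $e_{j+1}$ for $j < i$, and coloring $e_i$ with $\alpha$) produces a proper coloring of all of $G$, including $e$. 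Otherwise $\alpha$ is present at every $y_i$, and I pass to the $\alpha/\beta$-Kempe chain starting at $y_p$: maximality of the fan forces this chain to avoid $x$, so swapping the two colors along it is safe and liberates $\alpha$ at $y_p$, returning us to the shiftable case. Either way the coloring extends to $e$, closing the induction.

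The main obstacle is making this fan/Kempe step airtight in the presence of parallel edges. Three points need care: (i) verifying that a maximal fan whose tip is blocked genuinely locks the colors $\alpha$ and $\beta$ across all fan vertices; (ii) confirming that the $\alpha/\beta$-path from $y_p$ does not return to $x$, so the swap does not corrupt the coloring near $x$; and (iii) checking that the counting $|\bar\varphi(x)| \ge \mu + 1$, combined with $\mu_G(x,\cdot) \le \mu$, prevents the fan from consuming all edges at $x$ without ever exposing a usable missing color. Point (iii) is precisely where the bound $\mu$ (rather than $1$) is indispensable, and it is the step I expect to require the most delicate bookkeeping.
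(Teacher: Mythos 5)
Your lower bound and the reduction to extending a coloring of $G-e$ are fine, but the heart of your argument rests on a false claim: that ``maximality of the fan forces the $(\alpha,\beta)$-chain starting at $y_p$ to avoid $x$.'' It does not. You may assume $\beta$ is present at $x$ (otherwise shift the whole fan and color $e_p$ with $\beta$, done), so there is exactly one $\beta$-colored edge at $x$, and maximality of the fan forces that edge to \emph{be} a fan edge, say $e_j=xy_j$ with $\varphi(e_j)=\beta$. Nothing prevents the chain starting at $y_p$ from running to $y_j$ and then along $e_j$ into $x$, where it terminates because $\alpha$ is missing at $x$. This is exactly the case that every correct proof of Vizing's theorem must treat separately: one uses that $\beta$ is also missing at $y_{j-1}$ (the fan property for $e_j$), notes that the $(\alpha,\beta)$-path ending at $x$ has only one other endpoint and hence cannot end at both $y_p$ and $y_{j-1}$, swaps the chain through whichever of these two vertices lies on a chain avoiding $x$, and then shifts the fan truncated at that vertex. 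Your sketch has no second candidate vertex, so the induction cannot be closed as written; your point (ii) asks you to ``confirm'' a statement that is simply not true.

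The two-candidate repair is itself delicate for multigraphs: $y_{j-1}$ and $y_p$ may be the \emph{same} vertex when parallel edges make fan vertices repeat, which is precisely where the simple-graph argument with $\Delta+1$ colors breaks down and where the slack $\mu$ must actually be spent; your point (iii) gestures at this, but no counting is done. For comparison: the paper does not prove this theorem (it is cited from Vizing), but the multi-fan identity it quotes as Lemma 2.3(a) (from Stiebitz et al.) gives the bound immediately and packages all the fan/Kempe bookkeeping once and for all. Reduce to a critical edge $e\in E(x,y)$ and a maximal multi-fan $F$ at $x$ with respect to $e$ and a $(\chi'(G)-1)$-coloring of $G-e$; then
$\sum_{z\in V(F)}\bigl(d_G(z)+\mu_F(x,z)-(\chi'(G)-1)\bigr)=2$,
whereas if $\chi'(G)\ge \Delta+\mu+1$ every summand is at most $\Delta+\mu-(\Delta+\mu)=0$, a contradiction. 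That is the route you should take for multigraphs rather than transplanting the simple-graph fan argument verbatim.
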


Let $G$ be a graph with maximum degree $\Delta$. 
It is said to be {\em class I} if $\chi'(G)=\Delta$, otherwise it is {\em class II}. There are many hard problems related to edge coloring of graphs (see \cite{stiebitz:2012}).   A subgraph $H$ of $G$   is  {\em maximum $\Delta$-colorable}  if it is $\Delta$-edge-colorable and  contains as many edges as possible. The maximum $\Delta$-edge-colorable subgraphs have been extensively studied (see  \cite{haas:1996, Rizzi:2009, samvel:2010, MkSteffen:2012}). A {\em decomposition} of a graph $G$ is a set $D = \{H_1,...\ , H_k\}$ of pairwise edge-disjoint subgraphs of $G$ that cover the set of edges of $G$.  Since the decision problem of classifying graphs as class I or class II is NP-complete, and every class II graph contains a subgraph of class I, it is natural to consider how one could decompose a class II graph into subgraphs with certain properties, for example, into subgraphs of class I with degree conditions. 
 
 Mkrtchyan and Steffen \cite{MkSteffen:2012} proved the following result.
 
 
 \begin{theorem}[Mkrtchyan and Steffen \cite{MkSteffen:2012}]
 \label{max-1}
 Let $G$ be a simple graph with  $\Delta(G)=\Delta$ and  $\chi'(G) = \Delta + 1$.  Then  $G$ can be decomposed into a  maximum $\Delta$-edge-colorable subgraph $H_1$ and a subgraph  $H_2$ with $\chi'(H_2) = \Delta(H_2) =1$.
 \end{theorem}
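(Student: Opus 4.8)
The plan is to prove the stronger statement that for \emph{every} maximum $\Delta$-edge-colorable subgraph $H_1$ of $G$, the complementary subgraph $H_2:=G-E(H_1)$ is a matching. Since $\chi'(G)=\Delta+1>\Delta$ forces $H_1\neq G$, such an $H_2$ is nonempty, and a nonempty matching satisfies $\chi'(H_2)=\Delta(H_2)=1$, which gives the theorem. So fix a proper $\Delta$-edge-coloring $\varphi$ of $H_1$ with colors $\{1,\dots,\Delta\}$, and for $u\in V(G)$ write $\overline{\varphi}(u)$ for the set of colors missing at $u$. The one fact I would use repeatedly is a consequence of maximality: one cannot, by Kempe recolorings that keep every currently colored edge colored, reach a proper $\Delta$-coloring in which some edge of $H_2$ becomes colorable, since that would exhibit a $\Delta$-edge-colorable subgraph with $|E(H_1)|+1$ edges.

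First I would suppose, for contradiction, that some vertex $v$ is incident with two edges $vx,vy\in E(H_2)$. Because $d_G(v)\le\Delta$ and two of its edges are uncolored, $d_{H_1}(v)\le\Delta-2$, so $|\overline{\varphi}(v)|\ge 2$; similarly $\overline{\varphi}(x)\neq\emptyset$ and $\overline{\varphi}(y)\neq\emptyset$. If $\overline{\varphi}(v)\cap\overline{\varphi}(x)\neq\emptyset$ we could color $vx$ outright, so $\overline{\varphi}(v)\cap\overline{\varphi}(x)=\emptyset$, and likewise $\overline{\varphi}(v)\cap\overline{\varphi}(y)=\emptyset$. Next, for $\alpha\in\overline{\varphi}(v)$ and $\beta\in\overline{\varphi}(x)$ the maximal $(\alpha,\beta)$-alternating path from $x$ must end at $v$: otherwise swapping its colors frees $\alpha$ at $x$ while $v$ still misses $\alpha$, and $vx$ becomes colorable. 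The same holds from $y$. A color $\beta\in\overline{\varphi}(x)\cap\overline{\varphi}(y)$ would then make both $x$ and $y$ the opposite endpoint of the single $(\alpha,\beta)$-component containing $v$, forcing $x=y$, impossible as $G$ is simple. Hence $\overline{\varphi}(x)\cap\overline{\varphi}(y)=\emptyset$, and $\overline{\varphi}(v),\overline{\varphi}(x),\overline{\varphi}(y)$ are pairwise disjoint.

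The core of the argument is a Vizing-fan at $v$. Starting from the uncolored edge $vx_0:=vx$, I would grow a fan $vx_0,vx_1,\dots,vx_k$ with $\varphi(vx_{s+1})\in\overline{\varphi}(x_s)$. By the maximality fact, no fan vertex can miss a color of $\overline{\varphi}(v)$, for a fan shift followed by coloring the last fan edge with that shared color would color $vx_0$. Finiteness then forces a repeated color: some $\delta$ lies in $\overline{\varphi}(x_{j-1})\cap\overline{\varphi}(x_k)$. Fixing $\alpha\in\overline{\varphi}(v)$, I would examine the $(\alpha,\delta)$-Kempe chains: since $v$ misses $\alpha$ and both $x_{j-1},x_k$ miss $\delta$ but carry an $\alpha$-edge, the chain out of $v$ can meet at most one of them; shifting the fan to free the other, swapping, and coloring its edge with $\alpha$ would again color an extra edge.

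The \textbf{main obstacle} is precisely the step that makes class II graphs possible. Shifting the fan recolors $vx_j$ to $\delta$, which relocates the unique $\delta$-edge at $v$ and can therefore redirect the relevant $(\alpha,\delta)$-chain back into $v$, destroying the intended swap; this is the same obstruction that blocks a naive proof of $\chi'(G)\le\Delta$. What rescues the argument here is that $v$ has \emph{two} missing colors $\alpha,\alpha'\in\overline{\varphi}(v)$, coming from the two uncolored edges $vx,vy$: the $(\alpha,\delta)$- and $(\alpha',\delta)$-chains from $v$ agree only on $v$'s single $\delta$-edge and then diverge, so they cannot both return to $v$ in the obstructing configuration. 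Carefully pairing the choice of free color with the fan vertex to be freed (either $x_{j-1}$ or $x_k$) should then yield, in one of the cases, a legitimate recoloring that colors an additional edge of $H_2$, contradicting the maximality of $H_1$. Hence no vertex meets two edges of $H_2$, so $H_2$ is a matching and $G=H_1\cup H_2$ is the required decomposition.
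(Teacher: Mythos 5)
Your opening move---reducing the theorem to the stronger claim that \emph{every} maximum $\Delta$-edge-colorable subgraph $H_1$ has a matching as complement---is fatal: that stronger claim is false. Take $G=K_5$, so $\Delta=4$ and $\chi'(G)=5$. Every $4$-edge-colorable subgraph of $K_5$ has at most $8$ edges, since each color class is a matching and matchings in $K_5$ have at most $2$ edges. Now delete the two \emph{adjacent} edges $v_1v_2$ and $v_1v_3$: the remaining $8$ edges admit the proper $4$-coloring with classes $\{v_4v_5,v_2v_3\}$, $\{v_1v_4,v_2v_5\}$, $\{v_2v_4,v_3v_5\}$, $\{v_3v_4,v_1v_5\}$, so they form a \emph{maximum} $4$-edge-colorable subgraph whose complement is not a matching. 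Consequently no recoloring scheme starting from an arbitrary maximum $H_1$ can color an extra edge, and the step of your argument that actually breaks is the innocuous-looking sentence ``finiteness then forces a repeated color.'' That pigeonhole requires every fan vertex to miss at least one color; in Vizing's theorem this is free because $\Delta+1$ colors are used, but here you only have $\Delta$ colors, so a fan vertex of degree $\Delta$ in $H_1+vx$ misses nothing and the maximal fan can simply terminate there. In the $K_5$ example above, with $v=v_1$ and $x=v_2$, the maximal fan at $v_1$ is $(v_1v_2,v_2,v_1v_5,v_5)$ with $\overline{\varphi}(v_2)=\{4\}$ and $\overline{\varphi}(v_5)=\emptyset$: no repeated missing color ever appears, and indeed none can, since the subgraph is genuinely maximum. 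The ``main obstacle'' you flagged at the end is therefore not a technical nuisance to be finessed by pairing colors with fan vertices; it is a true obstruction, and the extra missing color at $v$ cannot rescue the argument.

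This is why the paper proves only the existential statement, and does so with an extremal choice rather than a universal one (its Theorem 1.4, whose $\mu=1$ case is the statement at hand). Among all maximum $\Delta$-edge-colorable subgraphs it picks $H_1$ minimizing $t(H_2)=\sum_{d_{H_2}(v)>\mu}d_{H_2}(v)$. If some vertex $y$ has two uncolored edges, it takes an uncolored $e\in E(x,y)$ and builds the multi-fan at the \emph{other} endpoint $x$, so that the deficient vertex $y$ is a fan vertex; the fan equation (Lemma 2.2, from Stiebitz et al.) then guarantees a fan vertex $z\neq y$ with $d_{H_1}(z)=\Delta$, hence with \emph{no} uncolored edges, and shifting along a linear sequence from $y$ to $z$ replaces $H_1$ by the equally large $\Delta$-edge-colorable subgraph $H_1+e-f$ with $f\in E(x,z)$. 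This does not color an additional edge (impossible, as $K_5$ shows); it relocates an uncolored edge from a vertex with two of them to a vertex with none, strictly decreasing $t$ and contradicting the extremal choice. If you want to salvage your write-up, replace the universal claim by this extremal setup: your fan analysis then only needs to move the uncolored edge, not absorb it, which is exactly what the missing-color obstruction permits.
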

 
 They conjectured that Theorem~\ref{max-1} can be generalized to graphs with multiple edges.
 \begin{conjecture}[Mkrtchyan and Steffen \cite{MkSteffen:2012}]
 \label{Conj1}
 Let $G$ be a graph with  $\Delta(G)=\Delta$ and  $\chi'(G) = \Delta + k$ where $k \geq 1$. Then  $G$ can be decomposed into a maximum $\Delta$-edge-colorable subgraph  $H_1$ and  a subgraph $H_2$ such that  $\chi'(H_2) = k$.
 \end{conjecture}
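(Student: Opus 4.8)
The plan is to take $H_1$ to be a maximum $\Delta$-edge-colorable subgraph of $G$, equipped with a fixed proper $\Delta$-edge-coloring $\varphi$, and to set $H_2=G-E(H_1)$. One half of the conclusion is free: since $H_1$ is $\Delta$-edge-colorable, coloring $H_1$ and $H_2$ with disjoint palettes gives $\Delta+k=\chi'(G)\le\chi'(H_1)+\chi'(H_2)\le\Delta+\chi'(H_2)$, so $\chi'(H_2)\ge k$. The whole problem is therefore to prove $\chi'(H_2)\le k$.

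The first step I would take is to bound $\Delta(H_2)$, which is the multigraph decomposition promised in the abstract. Writing $\bar\varphi(v)$ for the set of colors absent at $v$ under $\varphi$, a vertex incident with $t$ edges of $H_2$ has $d_{H_1}(v)=d_G(v)-t\le\Delta-t$, hence $|\bar\varphi(v)|\ge t$. If some $v$ carried $\mu+1$ uncolored edges, then for each uncolored edge $vu$ the sets $\bar\varphi(v)$ and $\bar\varphi(u)$ must be disjoint, since a common missing color would let us color $vu$ and enlarge $H_1$. From this disjointness I would run the usual Vizing-fan / Kempe-chain recoloring: choosing $\alpha\in\bar\varphi(v)$ and $\beta\in\bar\varphi(u)$ and swapping colors along the maximal $\alpha\beta$-alternating path out of $u$ frees $\alpha$ at $u$ whenever that path avoids $v$, making $vu$ colorable; the case where the path reaches $v$ is closed by a fan rotation that exploits the surplus $|\bar\varphi(v)|\ge\mu+1$ of missing colors against the $\mu+1$ uncolored edges at $v$. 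Either branch contradicts the maximality of $H_1$, yielding $\Delta(H_2)\le\mu$.

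The main obstacle is to convert this degree bound into the chromatic bound $\chi'(H_2)\le k$. By Vizing's theorem $k\le\mu$, so $\Delta(H_2)\le\mu$ combined with Shannon only gives $\chi'(H_2)\le\lfloor 3\mu/2\rfloor$, which is nowhere near $k$ when $k$ is small. Closing this gap must use the hypothesis $\chi'(G)=\Delta+k$ globally, not just the local maximality of $H_1$. The approach I would attempt is to select $H_1$ optimally among all maximum $\Delta$-edge-colorable subgraphs and then show, by an exchange argument, that an optimal $(\Delta+k)$-edge-coloring of $G$ can be rearranged so that $\Delta$ of its color classes lie inside $H_1$ and the remaining $k$ classes cover $H_2$. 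I expect exactly this step to be the crux and the reason the statement is only a conjecture: maximality of $H_1$ constrains edge counts, whereas $\chi'(H_2)=k$ constrains a palette size, and there is no evident mechanism forcing a maximum $\Delta$-colorable subgraph to interact so rigidly with a global optimal coloring.

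If this rigidity cannot be established, I would retreat to the variation recorded in the abstract, dropping the requirement that $H_1$ be maximum and instead asking only for class I subgraphs $H_1,H_2$ with $\Delta(H_1)=\Delta$ and $\Delta(H_2)=k$. Starting from an optimal $(\Delta+k)$-edge-coloring of $G$, one may group the color classes into a block of $k$ (forming $H_2$) and a block of $\Delta$ (forming $H_1$); both blocks are automatically class I, and the remaining task is the purely combinatorial one of redistributing classes among the blocks so that the two maximum degrees land exactly at $k$ and $\Delta$. This is considerably more flexible than fixing a maximum $\Delta$-colorable subgraph and is the route I would pursue to obtain an unconditional theorem.
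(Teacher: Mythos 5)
You have not proved the statement, and you essentially say so yourself: the step you call the crux --- upgrading the degree bound $\Delta(H_2)\le\mu$ to $\chi'(H_2)=k$ while keeping $H_1$ maximum --- is exactly where your argument stops, and it is also exactly why the statement is Conjecture~\ref{Conj1} rather than a theorem. The paper does not prove it either: it proves Theorem~\ref{decom} (your first step, $\Delta(H_2)\le\mu$, with the conclusion $\chi'(H_2)=\Delta(H_2)=\mu$ obtained only under the extra hypotheses $\mu\le 2$ and $\chi'(G)=\Delta+\mu$, by additionally killing odd cycles in $H_2$ through a further exchange), and it proves Theorem~\ref{main theorem}, the ``variation'' you retreat to, in which $H_1$ is no longer required to be maximum. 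So your diagnosis of the landscape is accurate, but as a proof attempt for Conjecture~\ref{Conj1} it has a genuine, admitted gap.

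Two technical points in the parts you do sketch. First, in your argument for $\Delta(H_2)\le\mu$, a vertex carrying $\mu+1$ edges of $H_2$ does not directly contradict the \emph{maximality} of $H_1$: the Vizing-fan/shifting exchange does not add an edge to $H_1$, it trades one edge for another ($H_1'=H_1+e-f$ has the same size), so no enlargement of $H_1$ ever materializes. The paper handles this by choosing $H_1$, among all maximum $\Delta$-edge-colorable subgraphs, to minimize the potential $t(H_2)=\sum_{d_{H_2}(v)>\mu}d_{H_2}(v)$, and showing the exchange strictly decreases $t$; your sketch needs this reorganization to be correct. Second, in your fallback, the claim that grouping the color classes of a $(\Delta+k)$-edge-coloring into blocks of sizes $\Delta$ and $k$ makes both blocks ``automatically class I'' is false: a union of $j$ matchings satisfies $\chi'\le j$, but it is class I only if its maximum degree equals its chromatic index (an odd cycle split into three color classes is class II). What is automatic is $\chi'(H_1)=\Delta$ and $\chi'(H_2)=k$ (forced by $\chi'(H_1)+\chi'(H_2)\ge\Delta+k$); forcing the maximum degrees up to $\Delta$ and $k$ is the entire content of Theorem~\ref{main theorem}, which the paper reduces via Lemma~\ref{observation} to finding a coloring and vertices $x,v$ with $\bar\varphi(x)\cap\bar\varphi(v)=\emptyset$, and then proves by induction on $\Delta$ with nontrivial structural claims (Kempe-chain arguments, Shannon's bound, and Vizing's Shannon-graph theorem) --- far from a ``purely combinatorial redistribution.''
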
 
    
   In this paper, we first generalize Theorem~\ref{max-1} as follows.
   \begin{theorem}
\label{decom}
	 Let $G$ be a class II graph with multiplicity  $\mu$ and maximum degree $\Delta$. Then $G$ can be decomposed into two subgraphs $H_1$ and $H_2$ such that  $H_1$ is maximum $\Delta$-edge-colorable and $\Delta(H_2)\leq \mu$. Moreover, if $\mu\leq 2$ and $\chi'(G)=\Delta+\mu$, then $\chi'(H_2)=\Delta(H_2)=\mu$.
\end{theorem}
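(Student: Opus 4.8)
The plan is to prove the two assertions in turn, in both cases fixing a maximum $\Delta$-edge-colorable subgraph $H_1$ together with a proper $\Delta$-edge-coloring $\varphi$ of $H_1$, and setting $H_2 := G - E(H_1)$. For a vertex $x$ write $\overline{\varphi}(x)$ for the set of colors in $\{1,\dots,\Delta\}$ missing at $x$ under $\varphi$; since $d_G(x)\le\Delta$ we have $|\overline{\varphi}(x)| = \Delta - d_{H_1}(x) \ge d_{H_2}(x)$. The guiding principle throughout is a maximality constraint: for any proper $\Delta$-edge-coloring $\varphi'$ of $H_1$ and any edge $e=xy\in E(H_2)$ one must have $\overline{\varphi'}(x)\cap\overline{\varphi'}(y)=\emptyset$, since otherwise a common missing color could be used to color $e$ and enlarge $H_1$, contradicting its maximality. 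Applying this to colorings $\varphi'$ obtained from $\varphi$ by Kempe $(\alpha,\beta)$-swaps yields the standard consequence that if $\alpha\in\overline{\varphi}(x)$ and $\beta\in\overline{\varphi}(y)$ (so $\alpha\ne\beta$ by disjointness), then $x$ and $y$ lie on a common $(\alpha,\beta)$-path of $H_1$: were they on different paths, swapping on the path through $x$ would free $\beta$ at $x$ while leaving $y$ untouched, allowing $e$ to be colored.

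For the first assertion I would argue by contradiction, assuming a vertex $v$ with $d_{H_2}(v)\ge \mu+1$. Since $\mu_G(v,\cdot)\le\mu$, the uncolored edges at $v$ reach at least two distinct vertices. Fix $\alpha\in\overline{\varphi}(v)$. For a neighbor $w$ of $v$ in $H_2$ and $\beta\in\overline{\varphi}(w)$, the $(\alpha,\beta)$-path through $v$ must end at $w$, because $v$ and $w$ each miss one of the two colors and hence are the two endpoints of that path. Consequently, if two distinct $H_2$-neighbors $w,w'$ of $v$ shared a missing color $\beta$, the $(\alpha,\beta)$-path through $v$ would be forced to end at both, which is impossible; thus the sets $\overline{\varphi}(w)$ over distinct $H_2$-neighbors of $v$ are pairwise disjoint and all disjoint from $\overline{\varphi}(v)$. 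The heart of the proof is then a fan argument starting from two uncolored edges $vw_1,vw_2$ with $w_1\ne w_2$: one performs a sequence of $(\alpha,\beta_i)$-swaps along the forced paths and tracks how the missing sets at $v,w_1,w_2$ evolve, showing that two distinct endpoints (as opposed to at most $\mu$ parallel edges to a single vertex) force some swap to create a common missing color at the ends of an uncolored edge, hence to color it, contradicting maximality. I expect this step to be the main obstacle, for it is exactly here that the threshold $\mu$ enters: up to $\mu$ parallel uncolored edges to one vertex cannot be broken by a single chain, while two distinct endpoints can.

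For the moreover statement, assume $\mu\le 2$ and $\chi'(G)=\Delta+\mu$; by the first assertion $\Delta(H_2)\le\mu$. I would first show $\Delta(H_2)=\mu$. If instead $\Delta(H_2)\le\mu-1$ then, since $\mu\le 2$, $H_2$ has maximum degree at most $1$, so it is a matching with $\chi'(H_2)\le 1\le\mu-1$; combining a $\Delta$-coloring of $H_1$ with a $\chi'(H_2)$-coloring of $H_2$ on disjoint palettes gives $\chi'(G)\le \Delta+(\mu-1)<\Delta+\mu$, a contradiction. Hence some vertex has $H_2$-degree exactly $\mu$.

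It remains to prove $\chi'(H_2)=\mu$. For $\mu=1$ this is immediate, as $H_2$ is a nonempty matching. For $\mu=2$, $H_2$ has maximum degree $2$, hence is a disjoint union of paths and cycles with $\chi'(H_2)\in\{2,3\}$, and $\chi'(H_2)=2$ unless $H_2$ contains an odd cycle $C=x_1x_2\cdots x_{2k+1}x_1$. I would rule out such a cycle as follows: every $x_i$ satisfies $d_{H_2}(x_i)=2=\mu$, so $|\overline{\varphi}(x_i)|\ge 2$, and the disjointness above shows any two of three consecutive vertices $x_{i-1},x_i,x_{i+1}$ have disjoint missing sets. Walking around $C$ and shifting the forced $(\alpha,\beta)$-paths by Kempe swaps, the odd length of $C$ produces a parity obstruction: the swaps cannot be carried consistently around the whole cycle without, at some edge, freeing a common missing color at its two ends and thereby coloring that edge, again contradicting the maximality of $H_1$. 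I expect this parity/Kempe analysis to be the main difficulty of the moreover part, with the oddness of $C$ preventing a globally consistent resolution of the chain structure. This yields $\chi'(H_2)=\Delta(H_2)=\mu$ and completes the proof.
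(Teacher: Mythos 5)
There is a genuine gap here, and it is not merely that the two steps you yourself flag as ``the main obstacle'' and ``the main difficulty'' are left unproved --- it is that the strategy built around them cannot be completed. You fix an \emph{arbitrary} maximum $\Delta$-edge-colorable subgraph $H_1$ and try to derive a contradiction from the existence of a vertex $v$ with $d_{H_2}(v)\ge\mu+1$ (and, in the moreover part, from an odd cycle in $H_2$). No such contradiction can exist, because it is false that every maximum $\Delta$-edge-colorable subgraph satisfies $\Delta(G-E(H_1))\le\mu$. Take $G=K_5$ with vertices $v_1,\dots,v_5$, so $\Delta=4$, $\mu=1$, $\chi'=5$. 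The subgraph $H_1=K_5-\{v_1v_2,v_1v_3\}$ is $4$-edge-colorable, with color classes $\{v_1v_4,v_2v_5\}$, $\{v_1v_5,v_3v_4\}$, $\{v_2v_3,v_4v_5\}$, $\{v_2v_4,v_3v_5\}$; it is maximum because each color class of any $4$-edge-colorable subgraph of $K_5$ is a matching of size at most $2$, so no such subgraph has more than $8$ edges. Yet $H_2=\{v_1v_2,v_1v_3\}$ has $\Delta(H_2)=2>1=\mu$. All the local Kempe facts you establish (disjoint missing sets at the ends of uncolored edges, forced $(\alpha,\beta)$-paths, pairwise disjoint missing sets of the $H_2$-neighbors of $v$) hold in this configuration --- one can check, e.g., that $\overline{\varphi}(v_2)$ and $\overline{\varphi}(v_3)$ are the disjoint singletons forced by your argument --- so they are consistent with a ``bad'' maximum subgraph and can never yield the contradiction you hope the fan argument will produce.

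The paper avoids this trap by not fixing $H_1$: it chooses, among all maximum $\Delta$-edge-colorable subgraphs, one minimizing a \emph{secondary} objective --- $t(H_2)=\sum_{d_{H_2}(v)>\mu}d_{H_2}(v)$ for the first assertion, and, among those with $\Delta(H_2)\le\mu$, the number of odd cycles of $H_2$ for the moreover part. A violating configuration then gives not a contradiction with edge-maximality but an exchange: the uncolored edge $e\in E(x,y)$ is critical in $G'=H_1+e$ with $\chi'(G')=\Delta+1$; Lemma~\ref{viz}(b) (the multi-fan bound of Stiebitz et al.) supplies a fan vertex $z\neq y$ with $d_{H_1}(z)\ge\Delta+1-\mu$, hence $d_{H_2}(z)\le\mu-1$ (respectively $\le 1$ in the odd-cycle case); and shifting along a linear sequence of the fan replaces $H_1$ by another maximum subgraph $H_1+e-f$ with $f\in E(x,z)$, strictly decreasing the secondary objective. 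That improvement/termination argument is precisely what your proposal is missing, and, by the $K_5$ example above, no amount of Kempe-chain analysis at a fixed maximum $H_1$ can substitute for it.
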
 

In Theorem~\ref{decom}, if $G$ is simple and class II, then $\mu= 1$ and $\Delta(H_2) = 1 = \mu$. Thus it implies Theorem~\ref{max-1}.

Note that a graph has chromatic index one if and only if its maximum degree is one, and for simple graphs, it is easy to see that a maximum $\Delta$-edge-colorable subgraph  is class I by Vizing's adjacency lemma. However,  Vizing's adjacency lemma only works for simple graphs  and consequently, as shown in \cite{MkSteffen:2012}, maximum $\Delta$-edge-colorable subgraphs  could be  class II  for multigraphs. Thus in Conjecture~\ref{Conj1}, $H_1$  or $H_2$ could  be Class II.

Our second main result decomposes an edge colored graph into two Class I subgraphs, which is a variation of Conjecture~\ref{Conj1}.

\begin{theorem}
\label{main theorem}
Let $G$ be a graph with  $\Delta(G)=\Delta$ and  $\chi'(G)=\Delta+k$ where $k\geq 1$. Then $G$ can be  decomposed into two Class I subgraphs $H_1$ and $H_2$  such that $\chi'(H_1)=\Delta(H_1)=\Delta$ and $\chi'(H_2)=\Delta(H_2)=k$.
\end{theorem}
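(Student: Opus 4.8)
The plan is to start from a proper $(\Delta+k)$-edge-coloring $\varphi$ of $G$, which exists because $\chi'(G)=\Delta+k$, and to produce the decomposition by \emph{splitting the color palette}. For any partition of the color set $\{1,\dots,\Delta+k\}$ into a block $S$ of size $\Delta$ and a block $T$ of size $k$, let $H_1$ consist of the edges whose color lies in $S$ and $H_2$ of the edges colored from $T$. Then $E(G)=E(H_1)\cup E(H_2)$ is a decomposition, $H_1$ is a union of $\Delta$ matchings and $H_2$ a union of $k$ matchings, so $\chi'(H_1)\le\Delta$ and $\chi'(H_2)\le k$ come for free. Since $\chi'(H)\ge\Delta(H)$ always holds, it suffices to arrange that $\Delta(H_1)=\Delta$ and $\Delta(H_2)=k$; these equalities force $\chi'(H_1)=\Delta(H_1)=\Delta$ and $\chi'(H_2)=\Delta(H_2)=k$, which is exactly the conclusion.

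Next I would translate the two degree requirements into a statement about \emph{missing colors}. For a vertex $x$ write $\overline{\varphi}(x)$ for the set of colors absent from its incident edges, so $|\overline{\varphi}(x)|=(\Delta+k)-d(x)\ge k$. A vertex $x$ satisfies $d_{H_1}(x)=\Delta$ precisely when $d(x)=\Delta$ and none of its edges is colored from $T$, i.e. when $x$ is a maximum-degree vertex with $\overline{\varphi}(x)=T$; a vertex $y$ satisfies $d_{H_2}(y)=k$ precisely when $y$ meets every color of $T$, i.e. when $\overline{\varphi}(y)\cap T=\emptyset$. Thus the whole theorem reduces to the following claim: \emph{$G$ admits a $(\Delta+k)$-edge-coloring together with a maximum-degree vertex $u$ and a vertex $v$ for which $\overline{\varphi}(u)\cap\overline{\varphi}(v)=\emptyset$.} Indeed, one then sets $T:=\overline{\varphi}(u)$ (which has size $k$); $u$ witnesses $\Delta(H_1)=\Delta$ and $v$ witnesses $\Delta(H_2)=k$. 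I would first record that this configuration is not excluded on cardinality grounds: by Shannon's bound $\Delta+k=\chi'(G)\le\lfloor 3\Delta/2\rfloor$, so $k\le\lfloor\Delta/2\rfloor$, and in particular $|\overline{\varphi}(u)|=k\le\Delta=d(v)$ for a maximum-degree $v$.

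To prove the claim I would argue by extremality and Kempe recoloring. Fix a maximum-degree vertex $u$ and put $T=\overline{\varphi}(u)$; if some vertex already misses no color of $T$ we are done, so assume every vertex misses a color of $T$, and among all $(\Delta+k)$-colorings and admissible maximum-degree vertices $u$ choose one maximizing $\max_v|\varphi(v)\cap\overline{\varphi}(u)|$, where $\varphi(v)$ denotes the colors present at $v$. Let $v$ attain this maximum, say $v$ sees $k-1$ colors of $T$ and misses $t^*\in T$ (the case of smaller overlap is analogous). If $v$ also meets some color $s^*\in S$, I would swap colors along the $(s^*,t^*)$-Kempe chain starting at $v$: afterwards $v$ gains $t^*$ and loses $s^*\notin T$, so $|\varphi(v)\cap T|$ strictly increases, contradicting maximality — provided the swap does not disturb $u$, so that $\overline{\varphi}(u)=T$ is preserved.

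The main obstacle is exactly this proviso. Since $u$ sees every color of $S$ (hence $s^*$) and misses $t^*$, the vertex $u$ is itself an endpoint of an $(s^*,t^*)$-Kempe chain; if $u$ and $v$ happen to lie on the \emph{same} chain, the swap makes $u$ meet $t^*$ and destroys the equality $\overline{\varphi}(u)=T$. The heart of the argument is therefore to separate the two relevant vertices onto different chains — by varying $s^*$ over the colors $v$ sees, by passing to another maximum-degree vertex, or via a Vizing-type fan rooted at $v$ — together with treating the degenerate case in which $v$ sees no color of $S$ (so $d(v)=k-1$), which requires a separate fan argument at $v$. Here the optimality of the coloring, namely that no color class can be emptied without dropping the palette below $\Delta+k$, is what ultimately blocks every obstruction and forces the desired pair $u,v$ into existence; carrying out this recoloring bookkeeping rigorously is the technical core of the proof.
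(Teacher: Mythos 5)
Your reduction is sound, and it is in fact the same first step the paper takes: its ``observation'' lemma says precisely that if some $(\Delta+k)$-edge-coloring $\varphi$ admits a vertex $x$ and a vertex $v$ with $\overline{\varphi}(x)\cap\overline{\varphi}(v)=\emptyset$, then splitting the palette along $\varphi(x)$ yields the desired two Class~I subgraphs, with $\Delta(H_1)=d_G(x)$ and $\Delta(H_2)=\Delta+k-d_G(x)$. So the theorem does reduce to your existence claim (indeed the two are equivalent: given the decomposition, coloring $H_1$ and $H_2$ with disjoint palettes produces such a pair $u,v$). The problem is that everything after this reduction in your write-up is a sketch, and the part you defer --- ``separate the two relevant vertices onto different chains \ldots carrying out this recoloring bookkeeping rigorously is the technical core'' --- is not bookkeeping; it is the entire content of the theorem. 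Since your claim is equivalent to the statement being proved, an argument that postpones its hard case has proved nothing yet.

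Concretely, the single-swap extremal argument you propose (maximize $\max_v|\varphi(v)\cap\overline{\varphi}(u)|$, then flip an $(s^*,t^*)$-chain at $v$) stalls exactly at the obstruction you name: $u$ sees $s^*$ and misses $t^*$, so $u$ is itself an endpoint of an $(s^*,t^*)$-chain, and when that chain is the one ending at $v$ the swap destroys $\overline{\varphi}(u)=T$; moreover, after any swap that moves $T$ itself, your extremal quantity is attached to the pair (coloring, $u$) and does not behave monotonically, so ``pass to another maximum-degree vertex'' is not a legitimate move inside the extremal argument. The paper's actual proof shows how much machinery is needed to beat this obstruction: it takes a counterexample minimizing $\Delta$, fixes a $\Delta$-vertex $x$, and proves via a chain of five subclaims (using Kempe chains and the minimality of $|\overline{\varphi}(x)\cap\overline{\varphi}(y)|$) that every $(\Delta-1)^+$-vertex $y\neq x$ must satisfy $|E(x,y)|\geq d(y)-k+1$; it then invokes Shannon's bound together with Vizing's theorem that graphs attaining $\chi'=\lfloor 3\Delta/2\rfloor$ contain a Shannon triangle to conclude there are at most two $(\Delta-1)^+$-vertices in all of $G$; and finally it does \emph{not} exhibit the pair $u,v$ directly, but instead deletes a color class through an edge joining the two high-degree vertices and applies induction on $\Delta$ to the smaller graph, reattaching the color class afterwards. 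Your proposal contains no substitute for the structural claims, no use of Shannon/Vizing-type theorems, and no induction, so the gap between your sketch and a proof is essentially the whole paper's Section~3 argument.
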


The proofs of Theorems~\ref{decom} and \ref{main theorem} will be presented in Section 3.

\section{Preliminaries and Lemmas}


In this section, we introduce additional notations and lemmas needed in the proofs of the theorems.
Let $G$ be a graph.  A $k$-{\sl vertex},
$k^+$-{\sl vertex}, or $k^-$-{\sl vertex} is a vertex of degree $k$, at least $k$, or at most $k$, respectively. We denote the set of all $k$-{\sl vertices}, $k^+$-{\sl vertices}, or $k^-$-{\sl vertices} in $V(G)$ by $V_{k}(G)$, $V_{\ge k}(G)$, or $V_{\le k}(G)$, respectively. 
For integers $r,s,t > 0$, let $T = T(r, s, t)$ be the graph consisting of three
vertices $x, y, z$ such that $\mu(x,y) = r$, $\mu(y,z) = s$, and $\mu(x,z) = t$. The graph $S_d = T(\lfloor \frac{d}{2} \rfloor,\lfloor \frac{d}{2} \rfloor,\lfloor \frac{d+1}{2} \rfloor)$ is called a {\it Shannon graph} of degree $d$. Vizing~\cite{Vizing} proved the following structural result for graphs achieving the upper bound in Theorem~\ref{shannon}.

\begin{theorem}[Vizing~\cite{Vizing}]\label{fat triangle}
Let $G$ be a graph with  $\Delta(G)=\Delta \geq 4$. If $\chi'(G)=\lfloor \frac{3}{2}\Delta \rfloor$, then $G$ contains a Shannon graph $S_\Delta$ as a subgraph.
\end{theorem}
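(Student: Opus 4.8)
Write $k:=\lfloor \frac{3}{2}\Delta\rfloor$, so that by hypothesis $\chi'(G)=k$. My plan is to reduce to a critical graph, harvest two cheap structural facts, and then run a recolouring argument to pin down the three multiplicities of a Shannon graph. For the reduction, let $H\subseteq G$ be edge-minimal with $\chi'(H)=k$ (a $k$-critical graph). Applying Shannon's bound (Theorem~\ref{shannon}) to $H$ gives $k\le\lfloor \frac{3}{2}\Delta(H)\rfloor$, while $\Delta(H)\le\Delta$; hence $\Delta(H)=\Delta$. Since a copy of $S_\Delta$ in $H$ is also one in $G$, I may assume $G=H$ is $k$-critical with $\Delta(G)=\Delta\ge4$. (The hypothesis $\Delta\ge4$ is genuinely needed: the Petersen graph is simple with $\Delta=3$ and $\chi'=4=\lfloor \frac{3}{2}\cdot 3\rfloor$, yet contains no $S_3$.) To exhibit $S_\Delta=T(\lfloor\frac{\Delta}{2}\rfloor,\lfloor\frac{\Delta}{2}\rfloor,\lceil\frac{\Delta}{2}\rceil)$ as a subgraph it suffices to find three vertices $x,y,z$ with $\mu(x,y),\mu(y,z)\ge\lfloor\frac{\Delta}{2}\rfloor$ and $\mu(x,z)\ge\lceil\frac{\Delta}{2}\rceil$.

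Two easy facts drive everything. First, by Vizing's bound $k=\chi'(G)\le\Delta+\mu(G)$, so $\mu(G)\ge k-\Delta=\lfloor\frac{\Delta}{2}\rfloor$; fix a pair $x,y$ with $m:=\mu(x,y)\ge\lfloor\frac{\Delta}{2}\rfloor$, the first side of the prospective triangle. Second, for any edge $uv$, criticality supplies a proper $(k-1)$-edge-colouring $\varphi$ of $G-uv$ in which the colour sets missing at $u$ and at $v$ are disjoint (else $uv$ could be coloured); counting the $k-d(u)$ colours missing at $u$ and the $k-d(v)$ at $v$ yields $d(u)+d(v)\ge k+1$. Hence every edge joins two vertices of degree at least $\lfloor\frac{\Delta}{2}\rfloor+1$, and at $x$ at most $d(x)-m\le\lceil\frac{\Delta}{2}\rceil$ edges run to $V(G)\setminus\{y\}$ (symmetrically at $y$).

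For the crux, delete one edge $e\in E(x,y)$ and take a $(k-1)$-colouring $\varphi$ of $G-e$; let $A,B$ be the (disjoint, each of size $\ge\lfloor\frac{\Delta}{2}\rfloor$) sets of colours missing at $x$ and $y$. For every $\alpha\in A$ and $\beta\in B$, the $\alpha\beta$-Kempe chain starting at $x$ must terminate at $y$ — otherwise swapping $\alpha,\beta$ along it frees the colour $\beta$ at both $x$ and $y$, letting $e$ be coloured — so all of these forced chains are $x$--$y$ paths. This abundance of forced chains, together with the degree budget (at most $\lceil\frac{\Delta}{2}\rceil$ non-parallel edges at each of $x,y$), is what I would use to show that the non-$xy$ edges at $x$ and at $y$ concentrate on a single common neighbour $z$. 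A further round of Kempe-chain and fan recolouring then pushes the multiplicities to their extremes, forcing $\mu(x,z)=\lceil\frac{\Delta}{2}\rceil$, $\mu(y,z)=\lfloor\frac{\Delta}{2}\rfloor$ and $\mu(x,y)=\lfloor\frac{\Delta}{2}\rfloor$ (relabelling so that the $\lceil\frac{\Delta}{2}\rceil$-side joins the two degree-$\Delta$ vertices), producing the desired $S_\Delta$.

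The main obstacle is exactly this last step: converting the global ``deficiency budget'' and the forced $x$--$y$ chains into the rigid local conclusion that essentially all edges at $x$ and at $y$ land on one common vertex $z$, and then simultaneously controlling all three multiplicities and the parity split between even and odd $\Delta$. This is the delicate multigraph fan / Kempe-chain bookkeeping (an elementary-fan analysis in a critical graph), and it is precisely where $\Delta\ge4$ enters, ruling out the sporadic simple extremal graphs — such as Petersen at $\Delta=3$ — for which the budget is too loose to force any concentration.
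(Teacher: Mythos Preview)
The paper does not prove this theorem: it is quoted as a classical result of Vizing, cited to~\cite{Vizing} without argument, and invoked only as a black box inside the proof of Claim~\ref{claim2}. So there is no paper's proof to compare against.

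As to your sketch on its own merits: the reduction to a $k$-critical subgraph with the same $\Delta$, the extraction of a pair $x,y$ with $\mu(x,y)\ge\lfloor\Delta/2\rfloor$ via Vizing's bound, the degree-sum inequality $d(u)+d(v)\ge k+1$ on critical edges, and the observation that every $(\alpha,\beta)$-chain with $\alpha\in A$, $\beta\in B$ is an $x$--$y$ path are all correct and standard. But you stop exactly where the actual content begins. You write that the abundance of forced chains together with the degree budget ``is what I would use to show'' concentration on a single common neighbour $z$, and that ``a further round of Kempe-chain and fan recolouring then pushes the multiplicities to their extremes''; neither step is executed. The concentration step is the heart of the theorem and is not automatic: nothing you have established prevents the at-most-$\lceil\Delta/2\rceil$ non-$xy$ edges at $x$ from spreading across several neighbours, and the forced $x$--$y$ Kempe paths do not by themselves pin down a common third vertex, since those paths may route through different intermediate vertices. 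What is missing is a genuine fan/adjacency analysis at the critical edge $e$ that forces the neighbours of $x$ and of $y$ outside $\{x,y\}$ to coincide in a single vertex and to carry the required multiplicities, with the parity case $\Delta$ odd versus even handled explicitly. As written this is a plausible plan, not a proof.
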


We denote  $\mathcal{C}^k(G)$ to be the set of all $k$-edge colorings of a graph $G$. Let $\varphi\in \mathcal{C}^k(G)$ and $C= \{1,2,\dots, k\}$ be the  color set.  For a vertex $v\in V$, denote by $\varphi(v)=\{\varphi(e) | e\in E(v)\}$  the set of colors present at $v$ and
$\phibar(v) = C\setminus \varphi(v)$ the set of colors not assigned to any edge incident with $v$. A color $\gamma$ is said to be {\it missing} at $v$ if $\gamma\in\phibar(v)$. For a color $\alpha$, let
$E_{\alpha} =\{e\in E : \phiv(e) =\alpha\}$. For an edge set $E_0\subseteq E(G)$, let $\phiv(E_0)=\{c\in C~|~ \phiv(e)=c~for~some~e\in E_0\}$. Let us start with the following observation.

\begin{lemma}
\label{observation}
Let $G$ be a graph with $\Delta(G)=\Delta$ and $\chi'(G)=\Delta+k$ where $k \geq 1$. Let $x$ be a vertex in $G$. If there is a $(\Delta+k)$-edge coloring $\phiv$ of $G$ and a vertex $v\in V(G)$ such that $\phibar(x)\cap \phibar(v)=\emptyset$, then $G$ can be decomposed into two  Class I subgraphs $H_1$ and $H_2$ such that $\Delta(H_1)=d_G(x)$ and $\Delta(H_2)= \Delta + k -d_G(x)$.  In particular, if $d_G(x) = \Delta$, then $\Delta(H_1)= \Delta$ and $\Delta(H_2) = k$.
\end{lemma}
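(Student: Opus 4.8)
The plan is to work directly with the given coloring $\phiv$ and split the color set $C = \{1,\dots,\Delta+k\}$ according to whether each color is present at $x$. First I would set $A = \phiv(x)$, the set of colors appearing on edges incident with $x$, and $B = \phibar(x) = C \setminus A$, the set of colors missing at $x$; note that $|A| = d_G(x)$ and $|B| = \Delta + k - d_G(x)$. Then I would let $H_1$ be the subgraph consisting of all edges whose color lies in $A$, and $H_2$ the subgraph consisting of all edges whose color lies in $B$. Since $\{A,B\}$ partitions $C$, the subgraphs $H_1$ and $H_2$ are edge-disjoint and together cover $E(G)$, so $\{H_1,H_2\}$ is a decomposition of $G$, and it remains to verify the degree and class conditions.

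For $H_1$: the restriction of $\phiv$ to $H_1$ is a proper edge coloring using only the $|A| = d_G(x)$ colors of $A$, so $\chi'(H_1) \le d_G(x)$, and since $\phiv$ properly colors $H_1$ every vertex is incident with at most $|A|$ edges of $H_1$, giving $\Delta(H_1) \le d_G(x)$. On the other hand every edge at $x$ carries a color in $A = \phiv(x)$, so all $d_G(x)$ of them lie in $H_1$ and $d_{H_1}(x) = d_G(x)$. Hence $\Delta(H_1) = d_G(x)$, and from $\Delta(H_1) \le \chi'(H_1) \le d_G(x) = \Delta(H_1)$ we get $\chi'(H_1) = \Delta(H_1) = d_G(x)$, so $H_1$ is class I.

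For $H_2$ the same reasoning gives $\chi'(H_2) \le |B| = \Delta + k - d_G(x)$ and $\Delta(H_2) \le |B|$. The only place the hypothesis is used is to force equality at the vertex $v$: since $\phibar(x) \cap \phibar(v) = \emptyset$, every color of $B = \phibar(x)$ fails to be missing at $v$, i.e. $B \subseteq \phiv(v)$, so $v$ is incident with exactly one edge of each color in $B$. Thus $d_{H_2}(v) = |B| = \Delta + k - d_G(x)$, which yields $\Delta(H_2) = \Delta + k - d_G(x)$, and as before $\chi'(H_2) = \Delta(H_2)$, so $H_2$ is class I. The ``in particular'' clause is just the substitution $d_G(x) = \Delta$, which gives $\Delta(H_1) = \Delta$ and $\Delta(H_2) = k$.

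Since the whole argument is a direct split of the color classes of a coloring we are handed, I do not expect a genuine obstacle here: the argument is essentially a bookkeeping observation, and the single nontrivial point is that $v$ attains the full degree $|B|$ in $H_2$, which is exactly what the disjointness $\phibar(x) \cap \phibar(v) = \emptyset$ guarantees. I would therefore expect the real difficulty of the paper to lie not in this deduction but in producing, for the theorems that invoke this lemma, a $(\Delta+k)$-coloring $\phiv$ together with a vertex $v$ whose missing colors avoid those of $x$.
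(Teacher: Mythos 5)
Your proposal is correct and follows essentially the same approach as the paper: both split the color classes into those present at $x$ (forming $H_1$) and those missing at $x$ (forming $H_2$), then use $d_{H_1}(x)=d_G(x)$ and the disjointness hypothesis to pin down $d_{H_2}(v)$. The only cosmetic difference is that the paper justifies $\chi'(H_1)=d_G(x)$ via the inequality $\Delta+k\leq \chi'(H_1)+\chi'(H_2)$, while you use the more direct chain $d_G(x)=d_{H_1}(x)\le\Delta(H_1)\le\chi'(H_1)\le d_G(x)$; both are valid.
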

\begin{proof} 
 Assume without loss of generality that $\phiv(x)=\{1,2,\dots, d_G(x)\}$. Let 
 $$H_1=\bigcup_{i=1}^{d_G(x)} E_i ~~~\mbox{and}~~~H_2=\bigcup_{i=d_G(x)+1}^{\Delta+k} E_i.$$
  Clearly $\chi'(H_1)\le d_G(x)$ and $\chi'(H_2)\le \Delta + k - d_G(x)$. Note  $d_{H_1}(x)=d_G(x)$ and $\Delta + k \leq \chi'(H_1) + \chi'(H_2)$. Thus  $\chi'(H_1)=\Delta(H_1)=d_{H_1}(x)= d_G(x)$. Since $\phibar(x)\cap \phibar(v)=\emptyset$, we have $d_{H_2}(v)=\Delta + k -d_G(x)$. Therefore  $\chi'(H_2)=\Delta(H_2)=d_{H_2}(v)= \Delta + k - d_G(x)$, as desired.
\end{proof}

Let $G$ be a graph with an edge $e\in E(x,y)$, and let $\varphi$ be an edge coloring of $G-e$. A sequence
$F=(e_1, y_1, \ldots, e_p, y_p)$ consisting of vertices and distinct edges is called a
{\em multi-fan} at $x$ with respect to $e$ and $\varphi$ if $y_1=y$, $e_1=e$, and for each $2\leq i\leq p$, we have $e_i\in E(x,y_i)$  and 
$\phiv(e_i) \in \overline{\varphi}(y_j)$ for some $1\leq j\leq i-1$. A  {\em linear sequence}  from $y_1$ to $y_s$, denoted by $L=(e_1,y_1, e_2,y_2,\dots,e_s,y_s)$, is a sequence consisting of distinct vertices and distinct edges such that $e_i\in E(x,y_i)$ for $1\leq i\leq s$ and $\varphi(e_i)\in\overline\varphi(y_{i-1})$ for  each $2\leq i\leq p$. Clearly for any $y_i\in V(F)$, the multi-fan $F$ contains a linear sequence from $y$ to $y_i$, as we could just pick the shortest multi-fan contained in $F$ ending with $y_i$.

A {\em shifting} from $y_1$ to $y_s$ in a linear sequence $L=(e_1,y_1,\dots,e_s,y_s)$ is an operation that obtains a new edge coloring of $G-e_s$ from $\varphi$ by recoloring the edge $e_t$ with the color of $e_{t+1}$ under $\varphi$ sequentially for each $1\le t\leq s-1$ and uncoloring the edge $e_s$. 

An edge $e\in E(x,y)$ is called a {\em critical} edge of $G$ if $\chi'(G-e)<\chi'(G)$. Clearly if $e$ is critical, then $\chi'(G-e)=\chi'(G)-1$. We first have the  following result regarding multi-fans.

\begin{lemma}
\label{viz}
	Let $G$ be a graph with $\Delta(G) = \Delta$ and multiplicity $\mu$. 
	  Suppose that  $\chi'(G) \geq \Delta +1$  and $e \in E(x,y)$ is a critical edge of $G$. Let $F=(e_1, y_1, \ldots, e_p, y_p)$ be
	a maximal multi-fan at $x$ with respect to $e$ and $\varphi\in\mathcal{C}^{\chi'(G)-1}(G-e)$.  Then we have the following:
	
	\noindent
	(a) (Stiebitz et al.~\cite{stiebitz:2012})  $|V(F)|\geq 2$ and $\sum_{z\in V(F)}(d_G(z)+\mu_F(x,z)-(\chi'(G)-1))= 2$, where $\mu_F(x,z)$ is the number of edges between $x$ and $z$ in $F$.	
	
	\noindent
	 (b) If $d_G(y)\leq \chi'(G) -\mu$, then $F$ contains a vertex $z\neq y$ with $d_G(z)\geq \chi'(G)-\mu$. 	
\end{lemma}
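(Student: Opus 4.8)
The plan is to deduce (b) from the counting identity of part (a) by a short contradiction argument, with essentially no additional machinery. Set $k'=\chi'(G)-1$, so that $\varphi$ uses the colors $\{1,\dots,k'\}$, $k'\geq\Delta$, and $\chi'(G)-\mu=k'+1-\mu$. Recall that $V(F)=\{y_1,\dots,y_p\}$ with $y_1=y$ and that $x\notin V(F)$, so every edge of $F$ joins $x$ to some $y_i$; in particular $\mu_F(x,z)\leq\mu_G(x,z)\leq\mu$ for each $z\in V(F)$. Part (a) then provides the identity
$$\sum_{z\in V(F)}\bigl(d_G(z)+\mu_F(x,z)-k'\bigr)=2,$$
which is the only input the proof will use besides the hypothesis $d_G(y)\leq k'+1-\mu$.

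First I would suppose, toward a contradiction, that every vertex $z\in V(F)\setminus\{y\}$ satisfies $d_G(z)<\chi'(G)-\mu$, that is, $d_G(z)\leq k'-\mu$. For each such $z$ the corresponding summand is at most $(k'-\mu)+\mu-k'=0$, while the summand for $y$ is at most $(k'+1-\mu)+\mu-k'=1$ by the degree hypothesis on $y$ together with $\mu_F(x,y)\leq\mu$. Adding these bounds over all of $V(F)$ yields a total of at most $1$, contradicting the value $2$ supplied by the identity above. Hence some $z\neq y$ must satisfy $d_G(z)\geq k'+1-\mu=\chi'(G)-\mu$, which is exactly the conclusion of (b).

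I do not expect a genuine obstacle, since all of the combinatorial work is already packaged in part (a); the only point requiring care is the bookkeeping that isolates $y$ from the remaining fan vertices, so that the single unit of ``slack'' in the sum is assigned to $y$ (which is permitted the larger degree bound $k'+1-\mu$) while every other vertex contributes a nonpositive summand under the contradiction hypothesis. It is also worth confirming the convention $x\notin V(F)$, so that each summand genuinely carries a $\mu_F(x,z)$ term bounded by $\mu$; this matches the definition $F=(e_1,y_1,\dots,e_p,y_p)$ stated above and ensures both of the elementary bounds $\mu_F(x,z)\leq\mu$ apply uniformly.
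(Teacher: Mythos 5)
Your proof is correct and follows essentially the same route as the paper: both deduce (b) from the counting identity in (a) using the bound $\mu_F(x,z)\leq\mu$, the paper by a direct case analysis on which summands can be positive, you by an equivalent contradiction argument that isolates the summand at $y$. The bookkeeping difference is purely cosmetic.
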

\begin{proof} (a)  is Theorem 2.1 in book~\cite{stiebitz:2012} due to Stiebitz et al. 

(b)  	 Since $\mu\geq \mu_F(x,z)$,  by (a) we have $$2 = \sum_{z\in V(F)}(d_G(z)+\mu_F(x,z)-(\chi'(G)-1))\leq \sum_{z\in V(F)}(d_G(z)+\mu-(\chi'(G)-1)).$$
 Thus $F$ contains either a vertex $z$  with $d_G(z)+\mu-(\chi'(G)-1)> 1$, or two vertices $z_i$ with $i=1,2$ such that  $d_G(z_i)+\mu-(\chi'(G)-1)= 1$. In the former case we have $d_G(z)>(\chi'(G)-1)-\mu+1= \chi'(G)-\mu$,  so $z\neq y$ (because $d_G(y)\leq \chi'(G)-\mu$). In the latter case we have $d_G(z_i)\geq (\chi'(G)-1)-\mu+1= \chi'(G)-\mu$ and one of $z_1$ or $z_2$ is not $y$, as desired.
\end{proof}

\section{Decomposition of class II graphs}

In this section, we prove Theorems~\ref{decom} and ~\ref{main theorem}. We first would like to point out that  the proof would be much easier if there is no restriction on the maximum degrees of $H_1$ and $H_2$.

\begin{proposition}
\label{prop}
Every class II graph  can be decomposed into two class I graphs.
\end{proposition}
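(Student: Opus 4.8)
The plan is to induct on the number of edges of $G$. The key preliminary observation, which also serves as the easy exit, is this: if $G$ has any edge $e$ for which $G-e$ is class I, then $G=(G-e)\cup\{e\}$ is already a decomposition into two class I graphs, since a single edge has $\chi'=\Delta=1$. So in the inductive step I may assume that $G-e$ is class II for \emph{every} edge $e$, which lets me apply the inductive hypothesis to $G-e$ freely. The workhorse will be the following folding lemma: if $A$ is a class I graph and $e=xy$ is an edge at least one of whose endpoints, say $x$, is a maximum-degree vertex of $A$, then $A+e$ is again class I. Indeed, take a proper $\Delta(A)$-edge-coloring of $A$ and assign $e$ the fresh color $\Delta(A)+1$; the result is proper and uses $\Delta(A)+1$ colors, and since $d_{A+e}(x)=\Delta(A)+1=\Delta(A+e)$ we get $\chi'(A+e)=\Delta(A+e)$. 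A companion sufficient condition is that $e$ can be absorbed into $A$ with an existing color whenever some color is missing at both $x$ and $y$ in the $\Delta(A)$-coloring, which is guaranteed when $d_A(x)+d_A(y)<\Delta(A)$.

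With these facts the induction runs as follows. Pick any edge $e=xy$. Since $G-e$ is class II with fewer edges, by the inductive hypothesis it decomposes into two class I graphs $A$ and $B$. If one of $x,y$ is a maximum-degree vertex of $A$, or if $A$ admits a common missing color at $x,y$, then $(A+e,\,B)$ is the desired decomposition of $G$; symmetrically if either condition holds for $B$. One checks on small examples (e.g.\ two disjoint triangles) that these two folding criteria already suffice to route $e$ into whichever part has the necessary slack.

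The main obstacle is the residual case in which neither folding condition holds for $A$ nor for $B$: both endpoints of $e$ are strictly below the maximum degree in each part, yet no suitable missing color is available, so the naive absorption produces a genuinely class II graph. To defeat this I would (i) choose $e$ to be incident with a maximum-degree vertex $x$ of $G$, so that the degree accounting $d_A(x)+d_B(x)=\Delta-1$ together with $\Delta(A),\Delta(B)\le\Delta$ constrains where the slack can sit, and (ii) allow \emph{recoloring} of $A$ or $B$ rather than working with the fixed coloring, invoking a Vizing multi-fan at $x$ in the spirit of Lemma~\ref{viz} to shuffle missing colors until one endpoint of $e$ becomes absorbable. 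An attractive alternative that sidesteps the residual case altogether is to reduce to Lemma~\ref{observation}: a proper $\chi'(G)$-edge-coloring together with two vertices whose missing-color sets are disjoint yields two class I graphs directly, so it suffices to produce such a pair, and because the proposition imposes no degree constraints on $H_1,H_2$ (unlike Theorem~\ref{main theorem}) there is ample freedom in choosing the coloring and the two vertices. I expect the bookkeeping in closing this last case, not the overall strategy, to be where the real work lies.
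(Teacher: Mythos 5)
The gap is exactly where you flag it: the residual case of your induction is never closed, and that case is not ``bookkeeping'' --- it is the entire problem. After the inductive step hands you a decomposition of $G-e$ into class I parts $A$ and $B$, you have no structural control whatsoever over $A$ and $B$; when both endpoints of $e$ lie strictly below the maximum degree in each part and no color is missing at both endpoints, neither of your folding criteria applies, and ``invoking a Vizing multi-fan to shuffle missing colors'' is a hope, not an argument (the multi-fan machinery of Lemma~\ref{viz} requires a critical edge in a graph colored with $\chi'-1$ colors, a hypothesis that $A+e$ or $B+e$ need not satisfy, and the inductive black box gives you no leverage to force it). Checking two disjoint triangles does not certify the step. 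So as written the proof does not go through.

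What you are missing is a single extremal idea that makes the induction unnecessary and is, in effect, your own ``attractive alternative'' carried out. The paper's proof is this: among all $\phiv\in\mathcal{C}^{\Delta+k}(G)$ choose one minimizing $|E_1|$, the number of edges colored $1$. Since $\chi'(G)=\Delta+k$, the class $E_1$ is nonempty; pick $e\in E_1$ with ends $x$ and $y$. If some color $i$ were missing at both $x$ and $y$, recoloring $e$ with $i$ would give a proper $(\Delta+k)$-edge-coloring with fewer edges of color $1$, contradicting minimality; hence $\phibar(x)\cap\phibar(y)=\emptyset$. Lemma~\ref{observation} (split the color classes into those present at $x$ and the rest) then immediately yields two class I subgraphs, with no induction at all. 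You explicitly proposed reducing to Lemma~\ref{observation} but left unspecified how to produce two vertices with disjoint missing-color sets; the minimization of a single color class is that mechanism, and it is the whole proof.
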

\begin{proof} Let $G$ be a graph with maximum degree $\Delta$ and $\chi'(G) = \Delta + k$ where $k\geq 1$.
Let $\varphi \in \mathcal{C}^{\Delta +k}(G)$ such that
$|E_1|$ is minimum, where $E_1$ is the set of edges in $G$ colored by the color $1$. Let $e\in E(x,y)$ and $e \in E_1$.

By the minimality of $|E_1|$, we have $\phibar(x)\cap \phibar(y) = \emptyset$. Otherwise one may recolor the edge $e$ with a color $i\in \phibar(x)\cap \phibar(y)$,  which contradicts the minimality of $|E_1|$.   By Lemma~\ref{observation},  $G$ can be decomposed into two class I subgraphs $H_1$ and $H_2$ such that $\Delta(H_1) = d_G(x) $ and $\Delta(H_2) = \Delta + k -d_G(x)$.
\end{proof}

We are ready to prove Theorem~\ref{decom}. 

\medskip
\noindent {\bf Theorem~\ref{decom}}  Let $G$ be a class II graph with multiplicity  $\mu$ and maximum degree $\Delta$. Then $G$ can be decomposed into two subgraphs $H_1$ and $H_2$ such that  $H_1$ is maximum $\Delta$-edge-colorable and $\Delta(H_2)\leq \mu$. Moreover, if $\mu\leq 2$ and $\chi'(G)=\Delta+\mu$, then $\chi'(H_2)=\Delta(H_2)=\mu$.

\begin{proof} We first prove the general case.  Take a maximum $\Delta$-edge-colorable subgraph $H_1$ of $G$ minimizing $$ t(H_2)= \sum_{d_{H_2}(v)>\mu}d_{H_2}(v),$$
where $H_2 = G-H_1$. It is sufficient to show $t(H_2) = 0$. Suppose to the contrary that $t(H_2) \geq 1$. Let  $y$ be a vertex  with $d_{H_2}(y)>\mu$ and  $e\in E(x,y)$ be an edge in $H_2$. 
	
	Since $H_1$ is maximum $\Delta$-edge-colorable, the edge $e$ is critical in $G'=H_1+e$ and $\chi'(G')=\Delta+1$. Let $\varphi\in\mathcal{C}^{\Delta}(H_1)$ and $F$ be a maximal multi-fan at $x$ with respect to $e$ and $\varphi$. Since $d_{H_2}(y)>\mu$ and $d_G(y)\leq \Delta$, we have 
	$$d_{G'}(y) = d_{H_1}(y) + 1= d_G(y) - d_{H_2}(y) +1\leq \Delta-(\mu +1) + 1<\Delta-\mu+1\leq \chi'(G')-\mu(G').$$
	 By Lemma~\ref{viz} (b) with $\chi'(G')=\Delta+1\geq \Delta(G')+1$, there is a vertex $z  \in V(F) \setminus \{y\}$  such that $d_{H_1}(z)\geq \chi'(G')-\mu(G')\geq\Delta+1-\mu$.  This implies $d_{H_2}(z) \leq \mu -1$. 
	
	By the definition of a multi-fan, $F$ has a linear sequence $L$ from $y$ to $z$ with the last edge $f\in E(x,z)$. By shifting colors along $L$, we see that $H_1'=G'-f=H_1+e-f$ is $\Delta$-edge-colorable and  has the same number of edges as $H_1$. Thus $H_1'$ is also maximum $\Delta$-edge-colorable.  Let $H_2' = G-H_1'$. Then $d_{H_2}(w) = d_{H_2'}(w)$ for each vertex $w \not = y,z$, $d_{H_2'} (y) = d_{H_2}(y)-1 \geq  \mu$ and $d_{H_2'}(z) = d_{H_2} (z) +1$. Since $d_{H_2}(z) \leq \mu -1$, we have $d_{H_2'} (z) \leq \mu$. Therefore 
	
	$$
	t(H_2')  =\left\{ \begin{array}{ll}
		t(H_2) -1 & \mbox{if $d_{H_2}(y) > \mu +1$, } \\
		t(H_2) -d_{H_2}(y) &  \mbox{ if $d_{H_2}(y) = \mu +1$.}
	\end{array}
	\right.
	$$
	
	In either case we have $t(H_2') < t(H_2)$, which contradicts the choice of $H_1$. This proves the general case.

	Now we prove the ``moreover" part.  Assume that $\mu\leq 2$ and $\chi'(G)=\Delta+\mu$. By the first part, let $H_1$ be a maximum $\Delta$-edge-colorable subgraph, such that $\Delta(H_2)=\Delta(G-H_1)\leq \mu$  where $H_2 = G-H_1$, and the number of odd cycles in $H_2$ is smallest. We will show  $\chi'(H_2) = \Delta(H_2) = \mu$. Note $1\leq \Delta(H_2) \leq \mu \leq 2$.
	
	If $\Delta(H_2) = 1$, then $\chi'(H_2) = 1$ and thus $\chi'(G) = \Delta + 1$, so $\mu = 1$. Therefore $\chi'(H_2) = \Delta(H_2) = \mu$.
	
	Now assume $\Delta(H_2) = 2$. Then $\mu =2$ and $H_2$ consists of vertex disjoint cycles and paths. To show $\chi'(H_2) = 2$, it is sufficient to show that $H_2$ contains no odd cycles. Suppose by contradiction that $H_2$ contains an odd cycle and $e \in E(x,y)$ is an edge in $E(H_2)$ lying in an odd cycle of $H_2$.

	Similar to the argument in the first part, $e$ is critical in $G'=H_1+e$ and $\chi'(G')=\Delta+1$. Let $\varphi\in\mathcal{C}^{\Delta}(H_1)$ and  $F$ be a maximal multi-fan at $x$ with respect to $e$ and $\varphi$. Since $d_{H_2}(y)=2$ and $d_G(y)\leq\Delta$, we have $d_{G'}(y)\leq \Delta-1= \Delta-\mu+1\leq \chi'(G')-\mu(G')$. Then by Lemma~\ref{viz} with $\chi'(G)=\Delta+1\geq\Delta(G')+1$, $F$ has a vertex $z\neq y$ such that $d_{H_1}(z)\geq \chi'(G)-\mu(G')\geq\Delta-1$. So $d_{H_2}(z)\leq 1$. This implies that $z$ is an endvertex of a path or a $0$-vertex in $H_2$.  Again $F$ has a linear sequence $L$ from $y$ to $z$ with last edge $f\in E(x,z)$, and by shifting colors along $L$, we see that $H_1'=G'-f=H_1+e-f$ is maximum $\Delta$-edge-colorable. We have that $\Delta(H_2') = 2$, where $H_2'=G-H_1'= H_2-e+f$, and the number of odd cycles in $H'_2$ is one less than in $H_2$. This contradicts the choice of $H_1$ and $H_2$ and thus proves the ``moreover" part. 
\end{proof}

For the proof of Theorem~\ref{main theorem}, we introduce additional notations. Let $\varphi\in \mathcal{C}^k(G)$.  For any two distinct colors $\alpha$ and $\beta$, let $G_{\varphi}(\alpha,\beta)$ be the subgraph of $G$ induced by $E_{\alpha} \cup E_{\beta}$. The components of $G_{\varphi}(\alpha,\beta)$ are called
{\it $(\alpha,\beta)$-chains}.  Clearly, each $(\alpha,\beta)$-chain is either a path or a cycle of edges alternately colored with $\alpha$ and $\beta$.
For each $(\alpha, \beta)$-chain $P$, let $\varphi/P$ denote the $k$-edge-coloring of $G$ obtained from $\varphi$ by exchanging colors $\alpha$ and $\beta$ on $P$.

For any $v\in V(G_{\varphi}(\alpha,\beta))$, let $P_v(\alpha, \beta, \varphi)$ denote
the unique $(\alpha, \beta)$-chain containing $v$. Note that, for any two vertices $u, \, v\in V(G_{\varphi}(\alpha,\beta))$, either $P_u(\alpha, \beta, \varphi)=P_v(\alpha, \beta, \varphi)$
or $P_u(\alpha, \beta, \varphi)$ is vertex-disjoint from $P_v(\alpha, \beta, \varphi)$. This fact will be used very often without mentioning. We are ready to prove Theorem~\ref{main theorem}.

\medskip
\noindent {\bf Theorem~\ref{main theorem}}  Let $G$ be a graph with  $\Delta(G)=\Delta$ and  $\chi'(G)=\Delta+k$ where $k\geq 1$. Then $G$ can be  decomposed into two Class I subgraphs $H_1$ and $H_2$  such that $\chi'(H_1)=\Delta(H_1)=\Delta$ and $\chi'(H_2)=\Delta(H_2)=k$.

\begin{proof}
We prove the theorem by contradiction. Let $G$ be a counterexample minimizing $\Delta=\Delta(G)$. Clearly, $\Delta \geq 3$.

 Let $x \in V(G)$ be a $\Delta$-vertex.  We have the following claim.
 
\begin{claim}\label{claim1}
For any $y\in V_{\ge \Delta-1}(G)\setminus \{x\}$, we have $|E(x,y)|\ge d(y)-k+1$. Consequently $V_{\ge \Delta-1}(G) \subseteq N(x)\cup \{x\}$.
\end{claim}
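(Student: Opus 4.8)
The plan is to exploit the minimality of $G$ together with Lemma~\ref{observation} to pin down the missing-colour structure at the $\Delta$-vertex $x$, and then to run a Kempe-chain recolouring argument to force a contradiction. First I would record the consequence of minimality: since $G$ is a counterexample and $d_G(x)=\Delta$, Lemma~\ref{observation} shows there is \emph{no} pair $(\varphi,v)$ with $\varphi\in\mathcal{C}^{\Delta+k}(G)$ and $\overline{\varphi}(x)\cap\overline{\varphi}(v)=\emptyset$, since such a pair would decompose $G$ as required. This yields the key fact
$$(\star)\qquad \overline{\varphi}(x)\cap\overline{\varphi}(v)\neq\emptyset \text{ for every } \varphi\in\mathcal{C}^{\Delta+k}(G) \text{ and every } v\in V(G).$$
For the ``consequently'' part I would then invoke Shannon's bound (Theorem~\ref{shannon}): $\Delta+k=\chi'(G)\le\lfloor 3\Delta/2\rfloor$ forces $k\le\lfloor\Delta/2\rfloor\le\Delta-1$, so the main inequality gives $|E(x,y)|\ge d(y)-k+1\ge(\Delta-1)-k+1=\Delta-k\ge 1$, whence every $y\in V_{\ge\Delta-1}(G)\setminus\{x\}$ lies in $N(x)$.

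It remains to prove $|E(x,y)|\ge d(y)-k+1$. Suppose not, so $|E(x,y)|\le d(y)-k$, and fix $\varphi\in\mathcal{C}^{\Delta+k}(G)$; call a colour \emph{free} if it lies in $\overline{\varphi}(x)$, so there are exactly $k$ free colours. Every edge of $E(x,y)$ carries a non-free colour, so $y$ has at least $d(y)-|E(x,y)|\ge k$ edges to vertices other than $x$. Writing $s=|\overline{\varphi}(x)\cap\overline{\varphi}(y)|\ge 1$ (by $(\star)$), exactly $k-s$ free colours are present at $y$, each necessarily on an edge from $y$ to a vertex $\neq x$. Hence at least $(d(y)-|E(x,y)|)-(k-s)\ge s\ge 1$ of these edges carry a non-free colour, which is the configuration I will recolour.

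The main step is a Kempe-chain reduction that drives $s$ to $0$. Pick a shared missing colour $c\in\overline{\varphi}(x)\cap\overline{\varphi}(y)$ and an edge $yw$ with $w\neq x$ of non-free colour $\gamma$ (so $\gamma\in\varphi(x)$). Both $x$ and $y$ miss $c$ and meet $\gamma$, so each is an endpoint of a $(c,\gamma)$-chain. If the chain $P_y(c,\gamma,\varphi)$ through $y$ is distinct from the one through $x$, then exchanging $c$ and $\gamma$ on $P_y(c,\gamma,\varphi)$ makes $y$ meet $c$ and miss $\gamma$ while leaving every edge at $x$ untouched; since $\gamma$ is non-free this keeps $\overline{\varphi}(x)$ unchanged and strictly decreases $|\overline{\varphi}(x)\cap\overline{\varphi}(y)|$. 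Iterating this recolouring eventually produces a colouring with $\overline{\varphi}(x)\cap\overline{\varphi}(y)=\emptyset$, contradicting $(\star)$.

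I expect the main obstacle to be exactly the case in which $x$ and $y$ lie on the \emph{same} $(c,\gamma)$-chain: there the swap merely exchanges $c$ for $\gamma$ in $\overline{\varphi}(x)$ and leaves $s$ unchanged, so it yields no progress. To overcome this I would lean on the freedom supplied by the counting step, namely that $y$ has at least $s$ non-free edges and shares at least $s$ missing free colours with $x$: I expect that for a suitable choice of the pair $(c,\gamma)$ the chain through $y$ can be made to miss $x$, and when no such choice exists the forced presence of an $x$–$y$ path alternating between a free and a non-free colour is a rigid configuration that can be destroyed by recolouring a second non-free edge at $y$ (or by swapping along a second shared free colour) before returning to the reduction. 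Turning this selection into a clean, always-applicable recolouring — guaranteeing either a $y$-chain avoiding $x$ or a direct contradiction — is the delicate part of the argument.
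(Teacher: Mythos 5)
Your setup is the same as the paper's: minimality of the counterexample combined with Lemma~\ref{observation} gives your $(\star)$, your derivation of the ``consequently'' part from Shannon's bound is correct, and your basic Kempe move (when $P_y(c,\gamma,\varphi)\neq P_x(c,\gamma,\varphi)$, swap on the chain through $y$ to decrease $|\overline{\varphi}(x)\cap\overline{\varphi}(y)|$ without touching $x$) is exactly the paper's Subclaim~\ref{sc1}, stated contrapositively for a colouring minimizing that intersection. However, there is a genuine gap, and you have located it yourself: the case in which \emph{every} admissible pair $(c,\gamma)$ with $c\in\overline{\varphi}(x)\cap\overline{\varphi}(y)$ and $\gamma\in\varphi(x)\cap\varphi(y)$ produces a single $(c,\gamma)$-chain whose two endpoints are $x$ and $y$. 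This case cannot be dismissed: if one takes a colouring minimizing $s=|\overline{\varphi}(x)\cap\overline{\varphi}(y)|$, it is precisely what must happen (that is the content of Subclaim~\ref{sc1}), so your iteration stalls while $s\ge 1$. Moreover, no recolouring that stays at $y$ can break it: swapping along an $x$--$y$ chain merely exchanges $c$ and $\gamma$ in the missing sets of \emph{both} endpoints, leaving $s$ unchanged, and recolouring ``a second non-free edge at $y$'' or ``a second shared free colour'' runs into the same obstruction, since Subclaim~\ref{sc1} applies uniformly to all pairs in $C_1\times C_4$.

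What the paper does to escape is qualitatively different and is absent from your sketch: it moves away from the pair $\{x,y\}$ altogether. The hypothesis $|E(x,y)|\le d(y)-k$ is used in a counting argument at $x$ (Subclaim~\ref{sc2}) to find a neighbour $z_1$ of $x$ with $\varphi(xz_1)\in\varphi(x)\cap\varphi(y)$; one then shows $\overline{\varphi}(z_1)\subseteq C_2\cup C_3$ (Subclaim~\ref{sc3}), uses this to find a second neighbour $z_2$ with $\varphi(xz_2)\in\overline{\varphi}(z_1)\cap C_3$ and proves the same containment $\overline{\varphi}(z_2)\subseteq C_2\cup C_3$ (Subclaim~\ref{sc5}), and finally pigeonholes $|\overline{\varphi}(z_1)|+|\overline{\varphi}(z_2)|\ge 2k$ against $|C_2\cup C_3|\le 2k-1$ to obtain a colour $\beta\in\overline{\varphi}(z_1)\cap\overline{\varphi}(z_2)$. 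Since $z_1$ and $z_2$ lie on vertex-disjoint $(\beta,\alpha)$-chains (with $\alpha\in C_1$), at least one of these chains avoids both $x$ and $y$, and swapping it contradicts Subclaim~\ref{sc3} or~\ref{sc5}. This two-neighbour mechanism is the heart of the proof of Claim~\ref{claim1}; without it (or a substitute for it), your argument does not go through.
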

\begin{proof}
Suppose to the contrary that $|E(x,y)|< d(y)-k+1$. Let $\phiv\in C^{\Delta+k}(G)$ be a coloring such that $|\phibar(x)\cap \phibar(y)|$ is minimum.

Define: 
$$C_1 = \phibar(x)\cap \phibar(y) = \overline{\phiv(x)\cup \phiv(y)},$$   $$C_2=\phibar(x)\setminus \phibar(y)= \phiv(y)\setminus\phiv(x),$$
$$C_3=  \phibar(y)\setminus \phibar(x)= \phiv(x)\setminus\phiv(y),$$   $$~~~\mbox{and} ~~C_4=\phiv(x)\cap \phiv(y).$$ 

It is easy to see that  $C_1, C_2,C_3,C_4$ are pairwise disjoint and   form a partition of the  color set $C=\{1,2,\dots,\Delta+k\}$.  By Lemma~\ref{observation}, we have $C_1 = \phibar(x)\cap \phibar(y)\not=\emptyset$.

\begin{subclaim}\label{sc1}
$P_x(\alpha,\eta,\phiv)=P_y(\alpha,\eta,\phiv)$ for any two colors $\alpha\in C_1$ and $\eta\in C_4$.  
\end{subclaim}
\begin{proof}
Otherwise  let $\phiv'=\phiv/P_y(\alpha,\eta,\phiv)$. Then $\phibar'(x)\cap\phibar'(y)=(\phibar(x)\cap \phibar(y)) \setminus \{\alpha\}$,  a contradiction to the minimality of $|\phibar(x)\cap \phibar(y)|$.
\end{proof}

\begin{subclaim}
\label{sc2}
$\phiv(E(x)\setminus E(x,y)) \cap C_4 \not = \emptyset$. 
\end{subclaim}
\begin{proof}
Suppose to the contrary $\phiv(E(x)\setminus E(x,y)) \cap C_4  = \emptyset$. Since  $\phiv(E(x,y)) \subseteq C_4$, we have $C_4=\phiv(E(x,y))$. This implies that $C_2=\phiv(E(y)\setminus E(x,y))$ and $C_3=\phiv(E(x)\setminus E(x,y))$. Thus $|C_4|=|E(x,y)|$, $|C_2|=d(y)- |E(x,y)|$, and $|C_3|=d(x)-|E(x,y)|=\Delta-|E(x,y)|$. Therefore
\begin{align*}
	\Delta+k =|C| &=|C_1|+|C_2|+|C_3|+|C_4|\\
	 &=|C_1|+\Delta+d(y)-|E(x,y)|\\
	&> 1+\Delta+d(y)-(d(y)-k+1)\\
	&=\Delta+k.
\end{align*}

 This  contradiction proves the subclaim.
  \end{proof}

By Subclaim~\ref{sc2}, let $z_1 \in N(x)\setminus \{y\}$  and $e_1 \in E(x,z_1)$ such that $\phiv(e_1) = \eta \in C_4$.

\begin{subclaim}\label{sc3}
 $\phibar(z_1)\cap (C_1\cup C_4) =\emptyset$. Therefore $\phibar(z_1) \subseteq C_2\cup C_3$.
\end{subclaim}
\begin{proof}
Suppose to  the contrary that there exists a color $\alpha\in \phibar(z_1)\cap (C_1\cup C_4) $. 

If $\alpha \in C_1$, then  $P_x(\alpha,\eta,\phiv)=P_{z_1}(\alpha,\eta,\phiv)=xz_1$,  a contradiction to Subclaim~\ref{sc1}.

Assume $\alpha \in C_4$. Let $\beta \in C_1$. Then $P_{z_1}(\beta,\alpha,\phiv)$ does not contain $x,y$ by Subclaim~\ref{sc1}. Let $\phiv'=\phiv/P_{z_1}(\beta,\alpha,\phiv)$. Then $\beta\in \phibar'(z_1)$ and $\beta \in \phibar'(x)\cap \phibar'(y)$. Note $\phibar'(x)\cap \phibar'(y) =  \phibar(x)\cap \phibar(y)$. Thus  we are back to the previous case.
\end{proof}


By Subclaim~\ref{sc3}, $\phibar(z_1)\subseteq C_2\cup C_3$. Since $|\phibar(z_1)|=\Delta+k-d(z_1)\ge k$ and $|C_2|=|\phibar(x)|-|C_1|=k-|C_1|<k$, we have $\phibar(z_1)\cap C_3\not=\emptyset$. 

Let $\gamma_1$ be a color in $\phibar(z_1)\cap C_3$. Then there are $z_2\in N(x)$ and $e_2 \in E(x,z_2)$ such that $\phiv(xz_2)=\gamma_1$. Since $\gamma_1\in \phibar(z_1)\cap \phibar(y)$, we have $z_2\not=z_1$ and $z_2\not= y$.

\begin{subclaim}\label{sc5}
$\phibar(z_2)\subseteq C_2\cup C_3= (\phibar(x)\cup \phibar(y))\setminus (\phibar(x)\cap \phibar(y))$.
\end{subclaim}
\begin{proof}
Suppose to the contrary that there is a color $\alpha \in \phibar(z_2)$ with $\alpha \not \in C_2\cup C_3$.

If $\alpha \in C_1$, then  $P_{x}(\alpha,\gamma_1,\phiv)=P_{z_2}(\alpha,\gamma_1,\phiv)=xz_2$. Thus $P_{z_1}(\alpha,\gamma_1,\phiv)$ does not contain $x$. Note that both $\alpha$ and $\gamma_1$ are missing at $y$. Let $\phiv'=\phiv/P_{z_1}(\alpha,\gamma_1,\phiv)$. Then $\alpha\in \phibar'(z_1)$ and $\alpha \in \phibar'(x)\cap \phibar'(y)$,  a contradiction to Subclaim~\ref{sc3}.

Now assume $\alpha \in C_4$. Let $\beta$ be a color in $C_1$.  
Then $P_{z_2}(\beta,\alpha,\phiv)$ does not contain $x$ or $y$ by Subclaim~\ref{sc1}. Let $\phiv'=\phiv/P_{z_2}(\beta,\alpha,\phiv)$. Then $\beta \in \phibar'(z_2)$ and $\beta \in \phibar'(x)\cap\phibar'(y)=C_1$, where we have reached the case above with $\beta$ replacing $\alpha$.
\end{proof}

Note that $|\phibar(z_1)|+|\phibar(z_2)|=2(\Delta+k)-d(z_1)-d(z_2)\ge 2k$ and $|C_2\cup C_3|=|\phibar(x)|+|\phibar(y)|-2|C_1|\le k+(k+1)-2=2k-1$.   By Subclaims~\ref{sc3} and \ref{sc5}, $\phibar(z_1) \cup \phibar(z_2) \subseteq C_2\cup C_3$. Thus $\phibar(z_1)\cap \phibar(z_2)\not=\emptyset$. Let $\beta$ be a color in $\phibar(z_1)\cap \phibar(z_2)$. Then $\beta \in C_2\cup C_3$. Let $\alpha$ be a color in $C_1$.

If $\beta \in C_2$, then $\beta,\alpha\in \phibar(x)$ and at least one of $P_{z_1}(\beta,\alpha,\phiv),P_{z_2}(\beta,\alpha,\phiv)$ does not contain $y$.

 If $\beta\in C_3$, then $\beta,\alpha\in \phibar(y)$ and at least one of $P_{z_1}(\beta,\alpha,\phiv)$ and $P_{z_2}(\beta,\alpha,\phiv)$ does not contain $x$.
 
  In either case, at least one of $P_{z_1}(\beta,\alpha,\phiv),P_{z_2}(\beta,\alpha,\phiv)$ does not contain $x$ or $y$. 
  
  If $P_{z_1}(\beta,\alpha,\phiv)$ does not contain $x$ or $y$,  let $\phiv'=\phiv/P_{z_1}(\beta,\alpha,\phiv)$. Then $\alpha\in \phibar'(z_1)$ and $\alpha \in \phibar'(x)\cap \phibar'(y)$,  a contradiction to Subclaim~\ref{sc3}. 
  
  If  $P_{z_2}(\beta,\alpha,\phiv)$ does not contain $x$ or $y$, let $\phiv'=\phiv/P_{z_2}(\beta,\alpha,\phiv)$. Then $\alpha\in \phibar'(z_2)$ and $\alpha \in \phibar'(x)\cap \phibar'(y)$, a contradiction to Subclaim~\ref{sc5}. This completes the proof of Claim~\ref{claim1}.
\end{proof}

By Claim~\ref{claim1}, every $(\Delta-1)^+$-vertex is either $x$ or is adjacent to $x$.  The next claim shows that there are at most two $(\Delta-1)^+$-vertices in $G$.

\begin{claim}\label{claim2}
There is at most one  $(\Delta -1)^+$-vertex in $N(x)$.
\end{claim}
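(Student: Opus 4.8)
The plan is to argue by contradiction, combining the multiplicity bound from Claim~\ref{claim1} with Shannon's upper bound (Theorem~\ref{shannon}) and Vizing's structural theorem for graphs attaining it (Theorem~\ref{fat triangle}). Suppose $N(x)$ contains two distinct $(\Delta-1)^+$-vertices $y_1$ and $y_2$. By Claim~\ref{claim1}, $|E(x,y_i)| \ge d(y_i) - k + 1 \ge \Delta - k$ for $i = 1, 2$. Since $E(x,y_1)$ and $E(x,y_2)$ are disjoint subsets of $E(x)$ and $d(x) = \Delta$, this forces $\Delta \ge |E(x,y_1)| + |E(x,y_2)| \ge 2(\Delta - k)$, i.e. $\Delta \le 2k$.

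On the other hand, Shannon's theorem gives $\Delta + k = \chi'(G) \le \lfloor 3\Delta/2 \rfloor$, which yields $\Delta \ge 2k$. Hence $\Delta = 2k$ and $\chi'(G) = 3k = \lfloor 3\Delta/2 \rfloor$; in particular $\Delta$ is even, and since $\Delta \ge 3$ we have $\Delta \ge 4$. Now Theorem~\ref{fat triangle} applies and produces a Shannon subgraph $S_\Delta = T(k,k,k)$ (using $\lfloor \Delta/2 \rfloor = \lfloor (\Delta+1)/2 \rfloor = k$) inside $G$, on three vertices $u_1, u_2, u_3$, each of degree $2k = \Delta$ within $S_\Delta$ and hence of degree exactly $\Delta$ in $G$, with $|E(u_i,u_j)| \ge k$ for all $i \neq j$.

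To finish, I would feed these three $\Delta$-vertices back into Claim~\ref{claim1}. If none of the $u_i$ equals $x$, then each $u_i$ is a $(\Delta-1)^+$-vertex distinct from $x$, so Claim~\ref{claim1} gives $|E(x,u_i)| \ge d(u_i) - k + 1 = k + 1$; as the sets $E(x,u_i)$ are disjoint in $E(x)$, summing forces $d(x) \ge 3(k+1) > 2k = \Delta$, a contradiction. Otherwise some $u_i = x$, say $u_1 = x$; then $|E(x,u_2)| + |E(x,u_3)| \le d(x) = 2k$ while both summands are at least $k$ (from $S_\Delta \subseteq G$), so $|E(x,u_2)| = k$, contradicting the bound $|E(x,u_2)| \ge d(u_2) - k + 1 = k+1$ given by Claim~\ref{claim1}. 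Either way we contradict the existence of two such neighbors, proving the claim.

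The crux of the argument---and the step I expect to be the real obstacle to spot---is recognizing that the mere existence of two heavy neighbors of $x$ squeezes $\Delta$ between $2k$ (from Shannon) and $2k$ (from the degree count), pinning $G$ exactly at the Shannon bound and thereby unlocking Theorem~\ref{fat triangle}. Once the balanced Shannon graph $T(k,k,k)$ is in hand, its three $\Delta$-vertices are simply too many, and too richly joined, to coexist with the multiplicity constraint that Claim~\ref{claim1} imposes at the single $\Delta$-vertex $x$.
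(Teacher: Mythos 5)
Your proof is correct and takes essentially the same route as the paper: both argue by contradiction, squeeze $\Delta = 2k$ by playing the multiplicity bound of Claim~\ref{claim1} against Shannon's bound (Theorem~\ref{shannon}), and then invoke Theorem~\ref{fat triangle} to produce a Shannon subgraph $S_\Delta$ whose three $\Delta$-vertices are incompatible with the structure around $x$. The only (cosmetic) difference is the endgame: the paper pushes the equality case to get $d(y)=d(z)=\Delta-1$ and $V_{\ge \Delta-1}(G)=\{x,y,z\}$, so $G$ has only one $\Delta$-vertex while $S_\Delta$ requires three, whereas you re-apply Claim~\ref{claim1} to the three vertices of $S_\Delta$ and contradict the edge count at $x$.
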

\begin{proof}
Suppose to the contrary that there exist two  $(\Delta-1)^+$-vertices $y,z \in N(x)$. By Claim~\ref{claim1}, 
 $$\Delta = d(x) \geq |E(x,y)| + |E(x,z)| \geq d(y)-k+1 + d(z)-k+1 \geq 2\Delta - 2k.$$
 Thus $k \geq \frac{\Delta}{2}$.  On the other hand,  we know $k\le \frac{\Delta}{2}$ by Theorem~\ref{shannon}.  This implies that $k=\frac{\Delta}{2}$, $\Delta$ is even, $d(y)=d(z)=\Delta-1$, and $N(x)=\{y,z\}$. Thus by Claim~\ref{claim1}, we have $V_{\ge \Delta-1}(G)=\{x,y,z\}$. However, Theorem~\ref{fat triangle} implies that $G$ must contain a Shannon graph of degree $\Delta$ as a subgraph which contains at least three $\Delta$-vertices,  a contradiction. 
\end{proof}

Let $y\in N(x)$ such that $d_G(y) = \max\{d_G(z) | z \in N(x)\}$. Define $e \in E(x,y)$. Then, each vertex  in $V(G)-\{x,y\}$ has degree at most $\Delta -2$.
 Let $\phiv \in \mathcal{C}^{\Delta+k}(G)$ and   $\alpha = \phiv(e)$.  Let $G'= G- E_{\alpha}$. Then $\Delta(G')=\Delta-1$ and $\chi'(G')=\Delta+k-1$. Thus by the minimality of $\Delta$,  $G'= G-E_\alpha$ can be decomposed into two class I subgraphs $H_1'$ and $H_2'$ such that $\Delta(H_1')=\Delta-1$ and $\Delta(H_2')=k$. Let $H_1=H_1'+E_\alpha$ and $H_2=H_2'$. Since $V_{\ge \Delta-1}(G)\subseteq \{x,y\}$, we have $\Delta(H_1)=\Delta(H_1')+1=\Delta$. Thus $H_1$ and $H_2$ form a desired decomposition. This completes the proof of Theorem~\ref{main theorem}.
\end{proof}

\section{Concluding remarks}

Theorem~\ref{main theorem} states that the  maximum degree of the two subgraphs are $\Delta$ and $k$ respectively.  
We believe that it can be generalized as follows.

\begin{conjecture}  Let $G$ be a graph with $\chi'(G) = \Delta + k$ and $p,q$ be  two positive integers such that  $p+q = \Delta + k$ with $p,q\leq \Delta$.
Then $G$ can be decomposed into two class I subgraphs $H_1$ and $H_2$ such that  $\Delta(H_1) = p$ and $\Delta(H_2) = q$.
\end{conjecture}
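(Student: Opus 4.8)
The plan is to reduce the statement to the production of a single coloring together with two ``saturating'' vertices, to clear the boundary values of $(p,q)$ using Theorem~\ref{main theorem}, and then to attack the interior range by induction on $\Delta$.

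\textbf{A sufficient condition.} I would first record the following strengthening of Lemma~\ref{observation}: it suffices to find a coloring $\varphi\in\mathcal{C}^{\Delta+k}(G)$ and two distinct vertices $u,w$ with $d_G(u)\ge p$, $d_G(w)\ge q$ and $\overline{\varphi}(u)\cap\overline{\varphi}(w)=\emptyset$. Indeed, disjointness gives $\overline{\varphi}(w)\subseteq\varphi(u)$, and $|\overline{\varphi}(w)|=\Delta+k-d_G(w)\le p\le d_G(u)=|\varphi(u)|$, so one may choose a color set $C_1$ with $\overline{\varphi}(w)\subseteq C_1\subseteq\varphi(u)$ and $|C_1|=p$; set $C_2=C\setminus C_1$ (so $|C_2|=q$), $H_1=\bigcup_{c\in C_1}E_c$, and $H_2=\bigcup_{c\in C_2}E_c$. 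Since $C_1\subseteq\varphi(u)$, the vertex $u$ meets all $p$ colors of $C_1$, whence $d_{H_1}(u)=p$ and $\Delta(H_1)=p$; since $\overline{\varphi}(w)\subseteq C_1$, the vertex $w$ meets all $q$ colors of $C_2$, whence $\Delta(H_2)=q$. Each $H_i$ uses only $|C_i|$ colors and attains that value as a degree, so both are class I. Thus the whole problem rests on producing these two saturating vertices.

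\textbf{Boundary values.} Because $q=\Delta+k-p\ge k$ and symmetrically $p\ge k$, we have $k\le p,q\le\Delta$; also $p=q=\Delta$ is impossible, since it would force $k=\Delta$ against $k\le\Delta/2$ from Theorem~\ref{shannon}. If $q=\Delta$ then $p=k$, and a desired decomposition is exactly the one given by Theorem~\ref{main theorem} after interchanging $H_1$ and $H_2$; the case $p=\Delta$ is symmetric. Hence I may assume the interior range $k\le p\le q\le\Delta-1$.

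\textbf{Interior range.} Here I would induct on $\Delta$, reusing the structural input already proved: by Claims~\ref{claim1} and~\ref{claim2}, $V_{\ge\Delta-1}(G)\subseteq\{x,y\}$ for a $\Delta$-vertex $x$ and at most one $(\Delta-1)^+$-neighbor $y$, sharing a common edge $e\in E(x,y)$. Deleting the color class $E_\alpha$ containing $e$ yields $G'$ with $\Delta(G')=\Delta-1$ and $\chi'(G')=(\Delta-1)+k$; I would apply the inductive hypothesis to $G'$ with the shifted pair $(p',q')=(p,q-1)$, which again lies in the admissible range, obtaining class I subgraphs $H_1',H_2'$ with $\Delta(H_1')=p$ and $\Delta(H_2')=q-1$, and then reinsert $E_\alpha$ into $H_2'$. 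As $E_\alpha$ is a matching, $\Delta(H_2'+E_\alpha)\le q$, with equality exactly when some maximum-degree vertex of $H_2'$ is covered by $E_\alpha$. To guarantee this I would carry a strengthened hypothesis asserting that the designated vertex $x$ saturates the larger part (so $d_{H_2'}(x)=q-1$), exactly as in the reduction above, so that the edge $e\in E_\alpha$ incident to $x$ performs the bump; the recursion then decreases $\Delta$ and $q$ in step until it terminates at a boundary instance handled by Theorem~\ref{main theorem}.

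\textbf{The main obstacle.} The real difficulty is this saturation bookkeeping. Pinning the designated vertex $x$ to one part forces the \emph{other} part to be saturated by a second vertex whose degree is at least the size of that part: since $x$ uses $q$ of its edges in $H_2$, it has at most $\Delta-q=p-k<p$ edges in $H_1$, so it cannot saturate $H_1$. Consequently a $(p,q)$-decomposition can exist only if $G$ has two distinct vertices of degrees $\ge p$ and $\ge q$, and producing the second one is exactly what the local fan arguments behind Claims~\ref{claim1} and~\ref{claim2} do not deliver: those claims bound $V_{\ge\Delta-1}(G)$ but say nothing for thresholds below $\Delta-1$, and when $p,q$ are both close to $\Delta$ two near-maximum-degree vertices are required. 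I expect that establishing the existence of this second saturating vertex, and then engineering $\overline{\varphi}(u)\cap\overline{\varphi}(w)=\emptyset$ by Kempe-chain recolorings in the style of the proof of Claim~\ref{claim1}, will require density or overfull structure of class II multigraphs (in the spirit of Theorem~\ref{fat triangle}) rather than fans alone. This step is where I anticipate a genuinely new idea will be needed, and it is also where the strengthened base case of the induction is most likely to break.
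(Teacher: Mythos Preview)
The statement you are attempting is posed in the paper as an open \emph{conjecture} (Section~4); the authors give no proof, so there is nothing to compare your attempt against. What you have written is an attack on an open problem, and you yourself flag in your final paragraph that a genuinely new idea would be required to close it.

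Your sufficient condition (the two-vertex strengthening of Lemma~\ref{observation}) is correct, as is the reduction of the boundary cases $p=\Delta$ or $q=\Delta$ to Theorem~\ref{main theorem}. One point you glide over: Claims~\ref{claim1} and~\ref{claim2} are proved in the paper only for a minimal counterexample to Theorem~\ref{main theorem}, the counterexample hypothesis entering solely through the line ``By Lemma~\ref{observation}, $C_1\neq\emptyset$.'' In your interior range $q\le\Delta-1$, your own sufficient condition can replace that line (if $\overline\varphi(x)\cap\overline\varphi(y)=\emptyset$ with $d(x)=\Delta\ge p$ and $d(y)\ge\Delta-1\ge q$, you obtain a $(p,q)$-decomposition and $G$ is not a counterexample to the conjecture), so the claims do transfer---but this substitution should be stated, not assumed.

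The gap you identify is the real one, and it is structural rather than bookkeeping. Your strengthened hypothesis pins the designated $\Delta$-vertex $x$ to the $q$-part, but when the recursion bottoms out at the pair $(\Delta',k)$, Theorem~\ref{main theorem} supplies only \emph{some} decomposition, with no control over which vertex realises which maximum degree; indeed the route through Lemma~\ref{observation} naturally places the $\Delta'$-vertex in the $\Delta'$-part, the opposite of what you need. More basically, as you observe, any $(p,q)$-decomposition forces two distinct vertices of degrees at least $p$ and $q$, and for $p,q$ in the middle of $[k,\Delta]$ none of the paper's tools (multi-fans, Kempe chains, Theorem~\ref{fat triangle}) manufactures that second high-degree vertex. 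This existence question is precisely the missing ingredient, and presumably why the authors left the statement open.
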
 

Regarding maximum $\Delta$-edge-colorable subgraphs, Mkrtchyan and Steffen \cite{MkSteffen:2012} proved the following.

  \begin{theorem}[Mkrtchyan and Steffen \cite{MkSteffen:2012}] 
 \label{max-2}
 Let $G$ be a  bridgeless cubic graph and $M$ be a perfect matching of $G$. Then there is a maximum $3$-edge-colorable subgraph $H$ such that $M\cup E(H) = E(G)$.
  \end{theorem}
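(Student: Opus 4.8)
The plan is to dispose of the trivial case first and then run an exchange (``hole--moving'') argument that pushes the uncolored edges of a maximum $3$-edge-colorable subgraph onto the prescribed perfect matching $M$. If $G$ is class I, then $\chi'(G)=3$ and $H=G$ works, since $M\cup E(G)=E(G)$. So I would assume $G$ is class II; as $G$ is cubic and simple, Shannon/Vizing give $\chi'(G)=4$. Write $F=G-M$, a $2$-factor (a disjoint union of cycles), because $M$ is perfect. The requirement $M\cup E(H)=E(G)$ is equivalent to $E(G)\setminus E(H)\subseteq M$, i.e. $H\supseteq F$, so the goal is a \emph{maximum} $3$-edge-colorable subgraph that omits only edges of $M$. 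Applying Theorem~\ref{decom} with $\Delta=3$ and $\mu=1$, $G$ has a maximum $3$-edge-colorable subgraph $H_0$ whose complement $J_0:=E(G)\setminus E(H_0)$ is a matching; this is the structural starting point.

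Among all maximum $3$-edge-colorable subgraphs $H$ whose complement $J=E(G)\setminus E(H)$ is a matching, I would choose one minimizing $|J\setminus M|$ and claim this is $0$. Suppose not, and fix $e=uv\in J\setminus M$ (so $e\in F$) with a $3$-edge-coloring $c$ of $H$. Since $J$ is a matching, $u,v$ have degree $2$ in $H$, each missing one color; maximality forces these to differ, say $u$ misses $a$, $v$ misses $b$, with third color $d$ (a common missing color would color $e$ and enlarge $H$). A standard Kempe argument shows the $(a,b)$-chain emanating from $v$ must terminate at $u$: otherwise swapping it would make $a$ missing at both ends of $e$, again enlarging $H$. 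Because $M$ is perfect, the edges $m_u,m_v\in M$ lie in $H$; writing $g_u,g_v$ for the remaining ($F$-)edges at $u,v$, we have $\{c(g_u),c(m_u)\}=\{b,d\}$ and $\{c(g_v),c(m_v)\}=\{a,d\}$.

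The engine is that one may \emph{move the uncolored edge} without changing $|J|$: if the current hole is an edge $pq$ with $p$ missing a color $\gamma$, and $q$ is incident to a colored edge $qr$ with $c(qr)=\gamma$, then recoloring $pq$ by $\gamma$ and uncoloring $qr$ again gives a maximum $3$-edge-colorable subgraph, with the hole moved to $qr$. Starting from $e=uv$ and using color $a$ (missing at $u$) at the $a$-edge $g_v$ of $v$, the hole travels precisely along the $(a,b)$-chain $P$ from $v$ to $u$. If $P$ traverses an edge of $M$, I halt the hole there, obtaining a maximum subgraph whose hole lies in $M$ and thereby decreasing $|J\setminus M|$---a contradiction. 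Likewise, if $c(m_u)=b$ (resp. $c(m_v)=a$), a single exchange (uncolor $m_u$, color $e$ by $b$) already parks the hole on $M$. The only surviving configuration is $c(m_u)=c(m_v)=d$ together with $P$ using no edge of $M$; then $P\subseteq F$, so $P=C-e$ where $C$ is an \emph{odd} cycle of $F$ through $e$, the edges of $C-e$ are colored alternately $a,b$, and every vertex of $C$ sends its $M$-edge into color class $d$.

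This last configuration is the heart of the matter and the step I expect to be the main obstacle: no $(a,b)$-chain reaches $M$, so the hole cannot be diverted onto $M$ while it stays on $C$. To break it I would recolor one of the color-$d$ matching edges hanging off $C$ into color $a$ or $b$ by flipping a suitable $(a,d)$- or $(b,d)$-chain, after which $P$ acquires an exit onto $M$ and the previous step applies; the delicate point is to perform this flip without re-entangling $u$ and $v$ (which would restore the obstruction). This is exactly where I expect the bridgeless hypothesis to enter: since $G$ is bridgeless, $e$ lies on a cycle of $G$, furnishing an auxiliary $u$--$v$ connection that lets the recoloring be carried out while keeping $u,v$ on distinct chains. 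Once the odd-cycle/all-$d$ configuration is shown to be always resolvable, the minimizing $H$ must satisfy $|J\setminus M|=0$; the remainder is bookkeeping to verify that each exchange keeps $J$ a matching of the same (minimum) size, hence keeps $H$ maximum. With $J\subseteq M$ we obtain $M\cup E(H)=E(G)$ with $H$ a maximum $3$-edge-colorable subgraph, as required.
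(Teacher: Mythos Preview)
The paper does not prove Theorem~\ref{max-2}; it is merely quoted in the concluding remarks as a result of Mkrtchyan and Steffen~\cite{MkSteffen:2012}, so there is no in-paper proof to compare against.

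As for your proposal itself, the overall exchange strategy is reasonable, but there is a genuine gap exactly where you flag it. In the ``stuck'' configuration---the $(a,b)$-chain from $v$ coincides with $C-e$ for an odd cycle $C$ of the $2$-factor $F$, and every $M$-edge pendant on $C$ is colored $d$---you do not actually produce an argument. You propose to flip an $(a,d)$- or $(b,d)$-chain hanging off $C$ so that the $(a,b)$-chain acquires an exit onto $M$, but you give no reason why such a flip avoids $u$ and $v$, nor why it cannot simply recreate the same obstruction at a different edge of $J$. Your appeal to bridgelessness is not tied to any concrete step: you say it ``furnish[es] an auxiliary $u$--$v$ connection'' because $e$ lies on a cycle, but $e$ already lies on the cycle $C\subseteq F$, so this observation adds nothing, and you never explain how a second cycle through $e$ (if one exists) would interact with the Kempe chains in question. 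Until this case is handled by an explicit recoloring that provably decreases $|J\setminus M|$ or contradicts maximality of $H$, the argument is incomplete; what you have written is a plan, not a proof.
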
 

We believe that  Theorem~\ref{max-2} can be generalized as follows:

\begin{conjecture} Let $G$ be a simple graph. Then for each maximum matching $M$ of $G$, there is a maximum $\Delta$-colorable subgraph $H$ of $G$ with $M\cup E(H)=E(G)$.
\end{conjecture}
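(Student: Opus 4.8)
The condition $M\cup E(H)=E(G)$ is equivalent to $E(G)\setminus E(H)\subseteq M$; that is, the edges \emph{omitted} from $H$ must form a submatching of $M$. Since $G$ is simple, Theorem~\ref{decom} (with $\mu=1$) already supplies a maximum $\Delta$-edge-colorable subgraph whose complement is a matching $N$. Every maximum $\Delta$-edge-colorable subgraph omits the same number $d=|E(G)|-\max\{|E(H')|:H'\ \Delta\text{-edge-colorable}\}$ of edges, and by Theorem~\ref{decom} these always form a matching of size $d$. As $N$ is itself a matching and $M$ is maximum, $d=|N|\leq |M|$, so the cardinality constraint is never the obstruction. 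The whole task is therefore to \emph{rotate} the omitted matching into $M$.

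I would argue by contradiction using an extremal choice. Among all maximum $\Delta$-edge-colorable subgraphs $H$, pick one minimizing the number of omitted edges lying outside $M$, namely $|(E(G)\setminus E(H))\setminus M|$. Suppose this quantity is positive and let $e=xy$ be an omitted edge with $e\notin M$. Because $M$ is maximum and $e\notin M$, the edge $e$ cannot join two $M$-unsaturated vertices, so, say, $x$ is $M$-saturated; let $f=xx'\in M$ be its matching edge. Since the omitted edges form a matching and $e$ is omitted at $x$, the edge $f$ lies in $H$. The basic move is to replace $H$ by $H'=H-f+e$: this leaves the edge count unchanged, so if $H'$ is again $\Delta$-edge-colorable it is maximum, and its omitted set is $(E(G)\setminus E(H))\setminus\{e\}$ together with $f\in M$, which has one fewer edge outside $M$ provided it is still a matching. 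If $x'$ already carries an omitted edge, the omission instead cascades along the alternating path of $M\triangle N$, and one performs the corresponding sequence of swaps; since $M$ is maximum, the components of $M\triangle N$ are paths and even cycles, so such paths are well structured.

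Everything hinges on showing $H'=H-f+e$ stays $\Delta$-edge-colorable. Exactly as in the proof of Theorem~\ref{decom}, since $H$ is maximum the edge $e$ is critical in $H+e$ and $\chi'(H+e)=\Delta+1$. Fix $\varphi\in\mathcal{C}^{\Delta}(H)$; here $d_H(x)=\Delta-1$, so $x$ misses a single color $\beta$, while $d_H(y)\leq\Delta-1$ gives a color missing at $y$, and criticality forces $\overline\varphi(x)\cap\overline\varphi(y)=\emptyset$. Uncoloring $f$ frees its color $\gamma=\varphi(f)$ at $x$, so now $x$ has two missing colors $\{\beta,\gamma\}$; I would then try to color $e$ by one of them, using Kempe $(\alpha,\delta)$-chains and a multi-fan at $x$ to bring a color missing at $y$ into $\{\beta,\gamma\}$ while leaving $f$ uncolored. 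The criticality of $e$ forces, for each $\delta\in\overline\varphi(y)$, the $(\beta,\delta)$-chain from $y$ to terminate at $x$, and this rigidity is exactly the leverage one exploits to carry out the recoloring in the generic case.

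The \textbf{main obstacle} is that $H'=H-f+e$ need \emph{not} be $\Delta$-edge-colorable. Although it has the same number of edges as $H$, it is a different subgraph (one extra edge at $y$, one fewer at $x'$) and may be class II, so the local swap can fail; this is precisely why the statement is a conjecture rather than a corollary of Theorem~\ref{decom}. Overcoming it requires either (i) proving that the fan/Kempe recoloring above always succeeds, which is delicate exactly in the ``tight'' configurations where the chains issuing from $x$ also meet $y$ or $x'$ and thereby block the color exchange, or (ii) organizing the swaps as a single global exchange along an $M\triangle N$ alternating path, rotating the omission one matching edge at a time while maintaining maximum $\Delta$-colorability at every step. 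Showing that such an exchange can always be realized is the heart of the problem; it is what Theorem~\ref{max-2} accomplishes for bridgeless cubic graphs with a perfect matching, and extending it to arbitrary simple graphs and maximum matchings is the open content of the conjecture.
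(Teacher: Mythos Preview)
The statement you are attempting is listed in the paper as a \emph{Conjecture}; the authors give no proof, so there is nothing to compare your proposal against. Your write-up is, accordingly, not a proof but a strategy outline, and you say so yourself in the final paragraph: the swap $H\mapsto H-f+e$ need not preserve $\Delta$-edge-colorability, and this is precisely the gap that keeps the statement open.

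Two remarks on the content of your outline. First, the sentence ``by Theorem~\ref{decom} these always form a matching of size $d$'' overstates what Theorem~\ref{decom} proves: that theorem uses a minimization over all maximum $\Delta$-edge-colorable subgraphs to locate \emph{one} whose complement has $\Delta(H_2)\le\mu$; it does not assert that \emph{every} maximum $\Delta$-edge-colorable subgraph has this property. For simple graphs the stronger statement is indeed true (and is implicit in Mkrtchyan--Steffen), but it requires a direct fan argument, not Theorem~\ref{decom}. Second, your extremal choice (minimize the number of omitted edges outside $M$) combined with multi-fan recoloring is the natural first attack, and your diagnosis of the obstruction is accurate: criticality of $e$ in $H+e$ pins the $(\beta,\delta)$-chains between $x$ and $y$, but once $f$ is uncolored the configuration at $x'$ can interfere, and there is no obvious reason the Kempe exchanges avoid it. This is exactly the phenomenon that Theorem~\ref{max-2} handles in the bridgeless cubic case with a perfect matching, and extending it is the open problem.
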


\begin{conjecture} Let $G$ be an $r$-regular graph with $\chi'(G)=\Delta+1$. Then for each maximum matching $M$ of $G$, there is a maximum $\Delta$-colorable subgraph $H$ of $G$ with $M\cup E(H)=E(G)$.
\end{conjecture}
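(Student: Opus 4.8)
The plan is to reformulate the statement as a question about where the ``missing'' edges of a maximum $\Delta$-edge-colorable subgraph may be placed. Observe that $M\cup E(H)=E(G)$ is equivalent to $E(G)\setminus E(H)\subseteq M$, so we are really asking for a maximum $\Delta$-edge-colorable subgraph whose complement $S:=E(G)\setminus E(H)$ is contained in the prescribed maximum matching $M$. Writing $\mathrm{def}:=\min\{|S'| : S'\subseteq E(G),\ G-S'\text{ is }\Delta\text{-edge-colorable}\}$ for the common deficiency of all maximum $\Delta$-edge-colorable subgraphs, the goal is to realize this minimum deficiency using only edges of $M$; since $S\subseteq M$ already forces $|S|\ge\mathrm{def}$, the content is the reverse inequality. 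Because $G$ is $r$-regular with $\chi'(G)=\Delta+1$ (so $k=1$ and, in the simple case, $\mu=1$), Theorem~\ref{decom} already guarantees a maximum $\Delta$-edge-colorable subgraph whose complement is a matching; what is new here is that this complementary matching must be a \emph{sub}matching of the given $M$.

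First I would set up an extremal argument. Among all maximum $\Delta$-edge-colorable subgraphs $H$ whose complement is a matching $S$ with $|S|=\mathrm{def}$ (a nonempty family by Theorem~\ref{decom}), choose one minimizing $|S\setminus M|$, and suppose for contradiction that $|S\setminus M|\ge 1$. Pick $e=uv\in S\setminus M$. Since $M$ is a maximum matching and $e\notin M$, at least one endpoint of $e$, say $u$, is saturated by an edge $f=uw\in M$. Fix a proper $\Delta$-edge-coloring $\varphi$ of $H=G-S$. As $S$ is a matching, $e$ is the only edge of $S$ at $u$ and the only one at $v$, so $d_H(u)=d_H(v)=\Delta-1$; hence $u$ and $v$ each miss exactly one color, say $a$ and $b$, and since $e$ cannot be colored we have $a\neq b$ (equivalently $e$ is a critical edge of $H+e$, which has $\chi'=\Delta+1$).

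The engine is the shifting operation used in the proof of Theorem~\ref{decom}: build a maximal multi-fan $F$ at $u$ with respect to $e$ and $\varphi$, and shift colors along a linear sequence so as to move the ``hole'' now at $e$ onto a different edge $uz$, producing a new maximum $\Delta$-edge-colorable subgraph with complement $(S\setminus\{e\})\cup\{uz\}$. The decisive point is to steer this recoloring so that the hole lands on the matching edge $f=uw$: if we can force $uz=f$, the new complement is $(S\setminus\{e\})\cup\{f\}$ with $f\in M$, strictly decreasing $|S\setminus M|$ and contradicting minimality. A clean special case exhibits the mechanism: if $\varphi(f)=b$ (the color missing at $v$), then uncoloring $f$ and coloring $e$ with $b$ performs exactly this exchange, since $b$ becomes available at $u$ and is already missing at $v$, and the edge count is unchanged. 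In general $\varphi(f)\neq b$, and one must first reposition the missing colors $a,b$ at $u,v,w$ by exchanging colors along $(a,b)$- and related Kempe chains before the hole can be transferred to $f$.

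I expect this repositioning step to be the main obstacle, and it is almost certainly why the statement is stated only as a conjecture. The multi-fan at $u$ is dictated by $\varphi$ and need not contain the prescribed edge $f$, so there is no automatic reason the hole can be parked on $f$; the natural remedy is to follow $M$-alternating structure. Concretely, when the naive exchange fails one finds that $w$ is itself incident with another bad removal edge $e'\in S\setminus M$ (it cannot meet a second edge of $M$, as $M$ is a matching), which suggests iterating along an $M$-alternating path and choosing the endpoint at which a deletion can be absorbed. Making this alternating/Kempe-chain analysis terminate—while controlling the interaction of the $(a,b)$-chains with the non-bipartite, possibly high-multiplicity structure of a regular Class~II graph, using only the degree slack supplied by regularity and Lemma~\ref{viz}—is the crux. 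For the base case $r=3$ this reduces to the perfect-matching analysis behind Theorem~\ref{max-2}, where $G-M$ is a disjoint union of cycles and each odd cycle can be opened by deleting a single suitably chosen edge of $M$; generalizing that cycle-based accounting to arbitrary $r$ is the part I would expect to demand the most care.
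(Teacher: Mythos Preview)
There is nothing in the paper to compare your proposal against: the statement is posed as an open \emph{conjecture} in the concluding remarks and the paper gives no proof, not even a sketch. So the task of matching your argument to the paper's is vacuous here.

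As for the proposal itself, you correctly recognize that it is not a proof. Your extremal setup and the ``move the hole from $e$ onto the matching edge $f=uw$'' idea are natural, and the clean special case $\varphi(f)=b$ is fine, but the general step---forcing the multi-fan/Kempe recoloring to land specifically on the prescribed $M$-edge $f$---is exactly the gap. The multi-fan machinery of Lemma~\ref{viz} guarantees the hole can be shifted to \emph{some} neighbor $z$ with $d_{H}(z)\ge \Delta-\mu$, not to a neighbor of your choosing; there is no mechanism in the paper (or in your outline) that pins $z=w$. Your fallback, chasing an $M$-alternating path when $w$ carries another bad edge $e'\in S\setminus M$, is plausible heuristics but you have not shown it terminates or avoids cycling, and in the multigraph $r$-regular setting the interaction between Kempe $(a,b)$-chains and the $M$-alternating structure can be genuinely complicated. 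In short, you have accurately located the obstruction the conjecture is meant to highlight, but what you have written is a program, not a proof.
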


\setlength{\bibitemsep}{0.1\baselineskip plus .05\baselineskip minus .05\baselineskip}

\end{document}